\newcommand{\Z}{\mathbb{Z}}
\newcommand{\N}{\mathbb{N}}
\newcommand{\Q}{\mathbb{Q}}
\newcommand{\Y}{\mathcal{Y}}
\newcommand{\X}{\mathcal{X}}
\renewcommand{\Im}{\text{Im}}
\DeclareMathOperator{\coker}{coker}
\DeclareMathOperator{\supp}{supp}
\DeclareMathOperator{\rank}{rank}
\newtheorem{theorem}{Theorem}
\newtheorem{lemma}{Lemma}
\newtheorem*{unnumtheorem}{Theorem}
\newtheorem{remark}{Remark}
\newtheorem{corollary}[lemma]{Corollary}
\newtheorem{claim}[lemma]{Claim}
\theoremstyle{definition}
\newtheorem{definition}{Definition}
\title{The integer homology threshold in $Y_d(n, p)$} 
\author{Andrew Newman\thanks{The Ohio State University \& Technische Universit\"{a}t Berlin}, Elliot Paquette\thanks{The Ohio State University}}
\date{\today}
\begin{document}
\maketitle
\abstract
We prove that in the $d$-dimensional Linial--Meshulam stochastic process the $(d - 1)$st homology group with integer coefficients vanishes exactly when the final isolated $(d - 1)$-dimensional face is covered by a top-dimensional face. This generalizes the $d = 2$ case proved recently by \L uczak and Peled and establishes that $p = \frac{d \log n}{n}$ is the sharp threshold for homology with integer coefficients to vanish in $Y_d(n, p),$ answering a 2003 question of Linial and Meshulam.

\section{Statement of the Result}

Here we consider the stochastic process version of the Linial--Meshulam random simplicial complex model. Recall that the Linial--Meshulam random simplicial complex model (introduced in \cite{LM}), denoted $Y_d(n, p)$ for $d$ a fixed dimension, $n \in \N$, and $p = p(n) \in [0, 1]$, is the probability space on $d$-dimensional simplicial complexes with complete $(d - 1)$-skeleton generated by including each possible $d$-dimensional face independently with probability $p$.  Accordingly the (discrete-time) stochastic process version of $Y_d(n, p)$, which we denote here as $\Y_d(n)$, following \cite{LP2}, is a Markov process $Y_d(n, 0) \subseteq Y_d(n, 1) \subseteq \cdots \subseteq Y_d(n, \binom{n}{d + 1})$ where $Y_d(n, 0)$ is the complete $(d - 1)$-complex on $n$ vertices and $Y_d(n, k)$ is generated by adding a $d$-dimensional face to $Y_d(n, k - 1)$ chosen uniformly at random from among all $d$-dimensional faces not included in $Y_d(n, k - 1)$.

For a topological property $P$ and a single instance of $\Y_d(n)$, the \emph{hitting time} for property $P$ is defined to be the minimal $m$ so that $Y_d(n, m)$ satisfies property $P$. A statement $S$ about $\Y_d(n)$ is said to hold \emph{with high probability} if the probability $S$ holds tends to $1$ as $n \to \infty.$

Our main new result on $\Y_d(n)$ is the following theorem; following tradition, we call a $(d-1)$-dimensional face \emph{isolated} if it is not covered by any $d$-dimensional face.
\begin{theorem}\label{mainresult}
Fix $d \geq 2$. With high probability the $(d - 1)$st homology group of $Y = \Y_d(n)$ with integer coefficients vanishes exactly when the last isolated $(d - 1)$-dimensional face is covered by a $d$-dimensional face. That is the hitting time for the property that no $(d - 1)$-dimensional face of $Y$ is isolated exactly coincides with the hitting time for the property that $H_{d - 1}(Y; \Z) = 0$.
\end{theorem}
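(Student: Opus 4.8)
The plan is to compare the two hitting times $\tau_{\mathrm{iso}}$ (``no $(d-1)$-face is isolated'') and $\tau_H$ (``$H_{d-1}(\,\cdot\,;\Z)=0$'') directly. Since the $(d-1)$-skeleton is always the complete one, $H_{d-1}(Y_d(n,m);\Z)=\coker(\partial_d)$ with $Z_{d-1}:=Z_{d-1}(K_n^{(d-1)};\Z)$ a fixed free abelian group of rank $\binom{n-1}{d}$; thus $H_{d-1}$ is a quotient along the process, so once it is trivial it stays trivial, and $\tau_H$ is a genuine hitting time. The inequality $\tau_H\ge\tau_{\mathrm{iso}}$ is immediate: if $\sigma$ is isolated in $Y_d(n,m)$ then the coordinate functional $z\mapsto z_\sigma$ kills $\Im\partial_d$ but evaluates to $\pm1$ on $\partial\tau$ for any $d$-simplex $\tau\supseteq\sigma$ of $K_n$, hence induces a surjection $H_{d-1}(Y_d(n,m);\Z)\twoheadrightarrow\Z$. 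So the content is the reverse inequality: with high probability $H_{d-1}(Y_d(n,\tau_{\mathrm{iso}});\Z)=0$.

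To prove this I would first localize $\tau_{\mathrm{iso}}$. Fix $\omega=\omega(n)\to\infty$ with $\omega=o(\log n)$, put $p_\pm=\frac{d\log n\pm\omega}{n}$, and let $M_\pm$ be the corresponding step indices under the usual coupling of $\Y_d(n)$ with $Y_d(n,p)$. A second-moment computation for the number of isolated $(d-1)$-faces shows that with high probability $Y_d(n,M_-)$ has a growing number of them while $Y_d(n,M_+)$ has none, so $M_-<\tau_{\mathrm{iso}}\le M_+$; a first-moment computation (here $d\ge2$ is used) shows that with high probability no two isolated $(d-1)$-faces of $Y_d(n,M_-)$ share $d-1$ vertices, hence no $d$-simplex covers two of them. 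Let $S$ be the set of $(d-1)$-faces isolated in $Y_d(n,M_-)$; because a face isolated at a later step is already isolated at $M_-$, the currently isolated set stays inside $S$, and $\tau_{\mathrm{iso}}$ is precisely the first step at which every face of $S$ has been covered.

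The whole theorem then follows from: \emph{(Key Lemma)} with high probability the coordinate functionals $\{z\mapsto z_\sigma\}_{\sigma\in S}$ induce an isomorphism $H_{d-1}(Y_d(n,M_-);\Z)\xrightarrow{\ \sim\ }\Z^{S}$. Granting it, track $\coker\partial_d$ as $d$-simplices are added on $(M_-,\tau_{\mathrm{iso}}]$: each added $\tau$ quotients the current group by $[\partial\tau]$, which under the identification with $\Z^S$ is the $\{0,\pm1\}$-vector recording which facets of $\tau$ lie in $S$. Every $\sigma\in S$ is covered by time $\tau_{\mathrm{iso}}$, and for its first cover $\tau(\sigma)$ the faces $\tau(\sigma)$ are distinct and each has $\sigma$ as its only $S$-facet (paragraph 2), so the corresponding vectors are $\pm e_\sigma$ and already generate $\Z^S$. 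Hence $H_{d-1}(Y_d(n,\tau_{\mathrm{iso}});\Z)=0$, which with the easy direction yields $\tau_H=\tau_{\mathrm{iso}}$.

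It remains to prove the Key Lemma, which carries essentially all the difficulty. Deleting the faces of $S$ (which are isolated, hence touch no $d$-simplex) from $Y_d(n,M_-)$ yields a complex $Y'$ with $H_{d-2}(Y';\Z)=0$ with high probability (easy, as $|S|$ is small), and a short chain-level computation, using that the $S$-faces are spread out, gives a split short exact sequence $0\to H_{d-1}(Y';\Z)\to H_{d-1}(Y_d(n,M_-);\Z)\to\Z^S\to0$ whose projection to $\Z^S$ is exactly the tuple of coordinate functionals; so the Key Lemma is equivalent to $H_{d-1}(Y';\Z)=0$, and, by universal coefficients (using $H_{d-2}(Y';\Z)=0$), to: with high probability, for every prime $q$ the complex $Y'$ carries no non-coboundary $(d-1)$-cocycle over $\Z/q$. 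I would attack this by the cocycle / co-isoperimetric method of Hoffman--Kahle--Paquette: normalize a hypothetical cocycle to minimal support modulo coboundaries, invoke an isoperimetric inequality for the complete $(d-1)$-skeleton to force that support to be either a single isolated face --- impossible in $Y'$ --- or large, and bound the number of large supports that can support a cocycle of $Y'$. The real obstacle is that a naive first-moment bound over supports is too weak (the relevant coboundary-expansion constant of the complete skeleton is only of order $n$), so one needs the refined structural analysis of minimal-support cocycles; pushing it to the \emph{sharp} constant $d$, while keeping track of the isolated faces throughout and --- crucially, to obtain integer rather than mod-$q$ homology --- making it uniform over all primes $q$ (e.g.\ by reducing to finitely many $q$ via a bounded-size torsion witness), is where the main work of the paper should go.
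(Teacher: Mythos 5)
Your high-level architecture mirrors the paper's: establish that slightly below the threshold the $(d-1)$st integral homology is free abelian with a basis indexed by the isolated facets, then track the cokernel step by step until $\tau_{\mathrm{iso}}$, using that no two isolated facets share a ridge so the first cover of each $\sigma\in S$ kills exactly $\pm e_\sigma$. The easy direction $\tau_H\ge\tau_{\mathrm{iso}}$, the localization between $M_\pm$, and the tracking argument are all correct, and the last of these is essentially the paper's deterministic Lemma~\ref{deterministic}. But the ``Key Lemma'' you reduce to --- that with high probability no subcomplex supports a nontrivial $(d-1)$-cocycle over any prime field --- is stated, honestly flagged as ``where the main work of the paper should go,'' and then not proved. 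That lemma \emph{is} the paper's technical contribution, and your sketch leaves out all three ideas that make it work: (i) count \emph{supports} $X$ rather than cochains, to kill the $(q-1)^k$ factor that is fatal for primes growing with $n$ (as the paper notes, for $q>\exp(n^d)$ the expected cocycle count diverges); (ii) make the ``bounded-size torsion witness'' quantitative via a Hadamard-type bound $|\coker(M)_T|\le t^{\rank M}$ (Claim~\ref{matrixbound}), which limits the relevant primes to $q\le\sqrt{d+1}^{|X|}$ and so bounds the union over fields; and (iii) even after restricting the field list, bound $\Pr[z(X)]$ --- which is \emph{not} simply $(1-p)^{b(X)}$ --- via the process-coupled binomial comparison of Lemma~\ref{probabilitybound}, which relies crucially on the coisoperimetric inequality for $b(X):=\inf_R b(\phi)$ being uniform in the coefficient field. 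On top of this one needs the Meshulam--Wallach enumeration of small-$\beta$ complexes to reach $c>d-1/2$; the Hoffman--Kahle--Paquette machinery you cite only gives $c>80d$. Without these pieces the Key Lemma is an open question, not a corollary of prior technique.

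A secondary issue: you route through the deleted complex $Y'$ and a short exact sequence $0\to H_{d-1}(Y')\to H_{d-1}(Y_d(n,M_-))\to\Z^S\to 0$, which by the long exact sequence of the pair requires showing the connecting map $\Z^S\to H_{d-2}(Y')$ vanishes, for which you invoke $H_{d-2}(Y';\Z)=0$. This is plausible since $S$ is small and its members pairwise disjoint in ridges, but it is an additional lemma asserted without proof, and it is unnecessary: the paper works directly with $Y$ (no deletion), characterizing $H^{d-1}(Y;\Z)$ via inclusion-minimal cocycles and then ruling out $q$-torsion for each prime by applying the matrix bound to $\partial_d^*|_X$ for a minimal-weight witness $X$, which is cleaner and stays inside the same chain complex throughout.
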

\noindent In fact we prove a slightly stronger result in Corollary \ref{isolatedfaces}, which shows that slightly before the final isolated $(d - 1)$-dimensional face is covered, the $(d - 1)$st homology group is a free abelian group with rank given by the number of isolated faces.

The $d = 2$ case of Theorem \ref{mainresult} was previously established by \L uczak and Peled \cite{LP2}. Moreover, the $d = 1$ is the classic result of Bollobas and Thomason \cite{BT} that the stochastic random graph becomes connected at the exact moment its last isolated vertex is covered by an edge.

\section{Background}
The Linial--Meshulam model is a higher-dimensional generalization of the Erd\H{o}s--R\'{e}nyi random graph, and one of the most fundamental results in random graph theory is the following theorem of Erd\H{o}s and R\'{e}nyi which establishes the connectivity threshold for $G(n, p)$. 
\begin{unnumtheorem}[\cite{ER}]
For $c < 1$ and $p = \frac{c \log n}{n}$, with high probability $G \sim G(n, p)$ is not connected, and for $c > 1$ and $p = \frac{c \log n}{n}$, with high probability $G \sim G(n, p)$ is connected.
\end{unnumtheorem}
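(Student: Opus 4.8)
The plan is to combine the first and second moment methods applied to the number of isolated vertices with a union bound over the possible connected components of $G$. Write $X = X_n$ for the number of isolated vertices of $G \sim G(n,p)$ with $p = \frac{c\log n}{n}$. By linearity of expectation $\mathbb{E}[X] = n(1-p)^{n-1}$, and taking logarithms gives $\log \mathbb{E}[X] = \log n + (n-1)\log(1-p) = (1 - c + o(1))\log n$, so $\mathbb{E}[X] \to \infty$ when $c < 1$ and $\mathbb{E}[X] \to 0$ when $c > 1$. This dichotomy drives both halves of the theorem.

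For $c < 1$ I would show $X \geq 1$ with high probability via the second moment method. Writing $X = \sum_v \mathbf{1}[v \text{ isolated}]$ and computing the pair correlations, one gets $\mathbb{E}[X^2] = \mathbb{E}[X] + n(n-1)(1-p)^{2n-3}$, so that $\mathbb{E}[X^2]/(\mathbb{E}[X])^2 = 1/\mathbb{E}[X] + \tfrac{n-1}{n}(1-p)^{-1} \to 0 + 1 = 1$ since $\mathbb{E}[X] \to \infty$ and $p \to 0$. Hence $\Var(X) = o((\mathbb{E}[X])^2)$ and Chebyshev's inequality gives $\mathbb{P}(X = 0) \leq \Var(X)/(\mathbb{E}[X])^2 \to 0$. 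A graph with an isolated vertex is disconnected, so $G$ is disconnected with high probability.

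For $c > 1$ the first observation is that there are no isolated vertices with high probability: since $\mathbb{E}[X] \to 0$, Markov's inequality gives $\mathbb{P}(X \geq 1) \leq \mathbb{E}[X] \to 0$. To upgrade the absence of isolated vertices to connectivity, note that a disconnected graph has a connected component on exactly $k$ vertices for some $2 \leq k \leq n/2$. Fixing a $k$-subset $S$, the probability that $S$ is exactly the vertex set of a component is at most $k^{k-2}p^{k-1}(1-p)^{k(n-k)}$, bounding the chance that $S$ spans a connected graph by the number of spanning trees on $S$ (Cayley's formula) times the probability each is present, and multiplying by $(1-p)^{k(n-k)}$ for the event that no edge leaves $S$. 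Then I would show $\sum_{k=2}^{n/2} \binom{n}{k} k^{k-2} p^{k-1} (1-p)^{k(n-k)} = o(1)$, using $\binom{n}{k} \leq (en/k)^k$ and $(1-p)^{k(n-k)} \leq e^{-pk(n-k)}$, splitting the sum: the small-$k$ range (say $k \leq \log^2 n$) is controlled by the crude bound, while for larger $k$ the factor $e^{-pk(n-k)}$ decays fast enough to kill $(en/k)^k$. Together with the absence of isolated vertices this yields connectivity with high probability.

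The main obstacle is the intermediate-range estimate in the $c > 1$ direction: showing that the component sum $\sum_{k=2}^{n/2} \binom{n}{k} k^{k-2} p^{k-1} (1-p)^{k(n-k)}$ is genuinely $o(1)$ requires partitioning the range of $k$ and selecting the appropriate bound on each piece so that the exponential decay in $(1-p)^{k(n-k)}$ dominates the binomial and tree-count factors; once the split is set up correctly, everything else is routine first- and second-moment bookkeeping.
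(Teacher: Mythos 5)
Your proposal is correct: the second-moment argument for isolated vertices when $c<1$, and for $c>1$ the first moment for isolated vertices combined with the union bound $\sum_{k=2}^{n/2}\binom{n}{k}k^{k-2}p^{k-1}(1-p)^{k(n-k)}=o(1)$ (Cayley's formula for the spanning tree count, $\binom{n}{k}\le (en/k)^k$, and a split of the range of $k$) is exactly the standard route, and the sum does go through as you describe since the bottleneck term $k=2$ is of order $n^{1-2c}\log n$ while larger $k$ decay geometrically. Note that the paper does not prove this statement at all --- it is quoted as background and attributed to Erd\H{o}s and R\'enyi --- so there is no internal proof to compare against; your argument is the classical one, and its ``isolated vertex plus union bound over larger obstructions'' structure is precisely the pattern that the paper's cocycle-counting argument generalizes to higher dimensions.
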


This statement can be given a homological reformulation by observing that a graph is connected if and only if its zeroth reduced homology group is trivial.
This motivates the general definition of \emph{homological connectivity over abelian group $R$} for higher dimensional simplicial complexes.
A $d$-dimensional simplicial complex $X$ is said to be homologically connected over $R$ provided that $\tilde{H_i}(X; R) = 0$ for all $i \leq d - 1$. In $Y_d(n, p)$, all complexes have complete $(d - 1)$-skeleton so homological connectivity over $R$ of $Y \sim Y_d(n, p)$ is equivalent to $\tilde{H}_{d - 1}(Y; R) = 0$. In the case that $R= \Z,$ we will simply say the complex is \emph{homologically connected}.  Indeed, by the universal coefficient theorem, a complex $X$ is homologically connected if and only if it is homologically connected over all abelian groups $R.$


Generalizing the connectivity result of Erd\H{o}s and R\'{e}nyi, Linial and Meshulam prove the following.
\begin{unnumtheorem}[\cite{LM}]
For $c < 2$ and $p = \frac{c \log n}{n}$, with high probability $Y \sim Y_2(n, p)$ satisfies $H_1(Y; \Z/2\Z) \neq 0$, and for $c > 2$ and $p = \frac{c \log n}{n}$, with high probability $Y \sim Y_2(n, p)$ satisfies $H_1(Y; \Z/2\Z)  = 0$. 
\end{unnumtheorem}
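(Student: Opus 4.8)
The plan is to prove the two halves separately. For the subcritical direction, $c<2$, I would exhibit with high probability an explicit nontrivial class in $H_1(Y;\Z/2\Z)$ coming from an isolated $1$-face. If $e=\{u,v\}$ is an edge of $Y$ lying in no triangle of $Y$, then the $\Z/2\Z$-cochain $\mathbf 1_e$ is a cocycle of $Y$ (its coboundary is supported on triangles containing $e$, of which there are none in $Y$), but it is not a coboundary, since over $\Z/2\Z$ every coboundary is a cut of the complete graph $K_n$ and a nonempty cut of $K_n$ has at least $n-1\ge 2$ edges; hence $H^1(Y;\Z/2\Z)\neq 0$, and over the field $\Z/2\Z$ this forces $H_1(Y;\Z/2\Z)\neq 0$. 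It then remains to show that $Y\sim Y_2(n,p)$ has an isolated edge with high probability when $p=\frac{c\log n}{n}$ with $c<2$, which I would do by a first- and second-moment computation on the number $N$ of isolated edges: $\mathbb E N=\binom n2(1-p)^{n-2}=(1+o(1))\tfrac12 n^{2-c}\to\infty$, while for two disjoint edges the two families of covering triangles are disjoint (a triangle has only three vertices), so such pairs are independent and contribute nothing to the variance, and for two edges sharing a vertex the single shared covering triangle gives only a negligible positive correlation; Chebyshev then yields $N\ge 1$ with high probability.

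The supercritical direction, $c>2$, is the substantive one, and I would show $\Pr[H^1(Y;\Z/2\Z)\neq 0]\to 0$ by a union bound over minimal nonzero cocycles. Filling triangles does not change which $1$-cochains are coboundaries (that depends only on the complete $1$-skeleton $K_n$), and the coboundary $\delta_1\mathbf 1_S$ of a cochain supported on $S\subseteq E(K_n)$ depends only on the coset $\mathbf 1_S+B^1$; so it suffices to consider, for each nonzero coset, a representative $S$ of minimum support size. Such an $S$ satisfies $|S|\le|S\,\triangle\,\delta(A)|$ for every $A\subseteq[n]$, hence $\deg_S(v)\le (n-1)/2$ for all $v$ and, more generally, $e_S(A,A^c)\le\tfrac12|A||A^c|$ for every $A$. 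For a fixed such $S$, the event that $\mathbf 1_S$ is a cocycle of $Y$ is exactly the event that $Y$ contains none of the $T(S)$ ``odd'' triangles of $S$ (those meeting $S$ in $1$ or $3$ edges), which has probability $(1-p)^{T(S)}\le e^{-pT(S)}$. Bounding the number of minimal supports of size $k$ by $\binom{\binom n2}{k}\le n^{2k}$, the whole sum is at most $\sum_{k\ge1}n^{2k}e^{-p\,\tau(k)}$, where $\tau(k)$ is the minimum of $T(S)$ over minimal non-coboundary $S$ with $|S|=k$.

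Everything therefore reduces to the key combinatorial estimate, that a minimal non-coboundary $S$ with $|S|=k$ has roughly $\tau(k)\gtrsim\min\{k\,n,\;c'\,n^3\}$ odd triangles. For small $S$ one argues directly: minimality bounds the degrees, and for each edge of $S$ the third vertices adjacent in $S$ to neither endpoint each complete an odd triangle, giving $\tau(k)=(1-o(1))\,k\,n$ in the relevant range, which makes $n^{2k}e^{-p\tau(k)}=n^{k(2-c+o(1))}$ summable over $k$ once $c>2$. For larger $S$ the linear bound necessarily saturates (since $T(S)\le\binom n3$), and I would instead show $T(S)=\Theta(n^3)$; one clean route is the identity $T(S)=\tfrac12\big(\binom n3-\tfrac16\operatorname{tr}(X_S^3)\big)$ for the $\pm1$ edge-signing matrix $X_S$ of $S$ (with zero diagonal), together with $\operatorname{tr}(X_S^2)=n(n-1)$, so that an operator-norm bound $\|X_S\|\le\beta n$ with $\beta<1$ — which I expect to follow from the cut-sparsity $e_S(A,A^c)\le\tfrac12|A||A^c|$ forced by minimality — gives $T(S)\ge(1-\beta-o(1))\tfrac{n^3}{12}$, and then $2^{\binom n2}e^{-p\,\Theta(n^3)}=e^{O(n^2)-\Theta(n^2\log n)}\to 0$. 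Proving this combinatorial lemma in full — interpolating between the two regimes and squeezing constants so that the threshold lands at exactly $c=2$ rather than some larger constant — is the part I expect to be the main obstacle; the probabilistic packaging around it is routine.
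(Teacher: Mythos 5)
Your subcritical half (an isolated edge supports a $\Z/2\Z$-cocycle that is not a cut, plus a first- and second-moment count of isolated edges) is correct and standard. The supercritical half has the right framework --- note the paper only cites this theorem, but the cocycle-counting scheme it sketches in Section 3 is exactly your plan: pass to a minimum-weight representative $S$ of each nonzero coset, use $\Pr(\mathbf 1_S \text{ is a cocycle}) = (1-p)^{T(S)}$, and union bound over supports. The gap is the key combinatorial estimate you defer, and as stated it is not obtainable. Your per-edge count gives $T(S)\ge k(n-2)-\sum_v \deg_S(v)^2$, and minimality only forces $\deg_S(v)\le (n-1)/2$, so the claimed $\tau(k)=(1-o(1))kn$ fails once $k=\Theta(n)$: for instance, a star at a vertex $v$ containing about half of the edges at $v$ satisfies every constraint your minimality argument extracts (the degree bound and the cut condition), is not a coboundary, and has $T(S)=k(n-1-k)\approx kn/2$. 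The bound that does hold uniformly over minimal representatives is the coisoperimetric inequality $T(S)\ge kn/3$ (the $d=2$ case of Lemma~\ref{coisoperimetric}), and feeding that into your crude enumeration $\binom{\binom n2}{k}\le n^{2k}$ gives a convergent sum only for $c>6$ --- precisely the $c>d(d+1)$ barrier the paper describes before noting that Meshulam and Wallach must ``find a better bound than $\binom{\binom nd}{k}(r-1)^k$'' for the number of relevant supports and use the coisoperimetric inequality in a more subtle way.

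So the missing idea is not a stronger universal lower bound on $\tau(k)$ (none of the needed strength exists) but a trade-off between the probability exponent and the enumeration: if $T(S)\le(1-\theta)kn$, then $S$ must be highly clustered --- a small subfamily $S'\subseteq S$ has neighborhood $\Gamma(S')$ covering a definite fraction of $S$ --- and hence there are far fewer than $n^{2k}$ such supports; this is exactly the content of Lemmas~\ref{claim42} and~\ref{betabound} (taken from \cite{MW}), and it is what makes the threshold land at $c=d$ rather than $c=d(d+1)$. Without such a refined count your union bound cannot reach $c=2$. Separately, your large-$k$ spectral step --- deducing $\|X_S\|\le\beta n$ with $\beta<1$ from the cut condition $e_S(A,A^c)\le\tfrac12|A||A^c|$ --- is unjustified (cut-type sparsity does not control the operator norm of a $\pm1$ signing up to the constants you need), but that regime is in fact the easy one: against the trivial count $2^{\binom n2}=e^{O(n^2)}$ any bound $T(S)=\omega(n^3/\log n)$ suffices, and $T(S)\ge kn/3$ already gives this when $k=\Omega(n^2/\log n)$. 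The genuinely delicate range is $k$ of order $n$ up to roughly $n^2/\log n$, where your outline currently has no valid estimate and where the clustering/enumeration argument is indispensable.
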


For any $d$-complex $Y$, $H_{d - 1}(Y; \Z/2\Z) = 0$ implies that $H_{d - 1}(Y; \Q) = 0$. By the universal coefficient theorem, therefore, the above result of Linial and Meshulam implies that $H_1(Y;\Z)$ is finite for $Y \sim Y_2(n, \frac{c \log n}{n})$ and $c > 2$. However, the result \emph{does not} imply that $H_1(Y, \Z) = 0$ in this case; a priori, it may be some other finite group. Thus, unlike the case of the Erd\H{o}s--R\'{e}nyi random graph, it is not sufficient to consider only $\Z/2\Z$ coefficients to show that $H_{d - 1}(Y)$ is trivial. Generalizing the Linial--Meshulam result to higher dimensions and to other coefficients rings, Meshulam and Wallach prove the following result.
\begin{unnumtheorem}[\cite{MW}]
Fix $d \geq 1$, and let $R$ be a fixed finite abelian group. For $c < d$ and $p = \frac{c \log n}{n}$, with high probability $Y \sim Y_d(n, p)$ satisfies $H_{d - 1}(Y; R) \neq 0$, and for $c > d$ and $p = \frac{c \log n}{n}$, with high probability $Y \sim Y_d(n, p)$ satisfies $H_{d - 1}(Y; R) = 0$. 
\end{unnumtheorem}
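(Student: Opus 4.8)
The plan is to treat the two thresholds separately; throughout, $p=\frac{c\log n}{n}$.

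\textbf{Subcritical direction ($c<d$).} The first observation is purely topological: a single isolated $(d-1)$-face already forces $H_{d-1}(Y;R)\neq 0$. Indeed, if $\sigma$ is a $(d-1)$-face of $Y$ lying in no $d$-face, pick a vertex $w\notin\sigma$ and set $z=\partial[\sigma\cup\{w\}]$; this is a $(d-1)$-chain supported on faces of the complete $(d-1)$-skeleton, it lies in $\ker\partial_{d-1}$, and the coefficient of $\sigma$ in $z$ is the unit $\pm 1_R$. Since no $d$-face of $Y$ contains $\sigma$, every boundary of a $d$-face of $Y$ has coefficient $0$ on $\sigma$, so $z$ is not a boundary and represents a nonzero class. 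It therefore suffices to show that $Y$ has an isolated $(d-1)$-face with high probability. Let $Z$ count them: a fixed $(d-1)$-face lies in $n-d$ potential $d$-faces, each present independently with probability $p$, so $\mathbb{E}[Z]=\binom{n}{d}(1-p)^{n-d}=(1+o(1))\tfrac{1}{d!}\,n^{d-c}\to\infty$ for $c<d$. A standard second-moment computation — two $(d-1)$-faces sharing at most $d-2$ vertices have disjoint families of potential covers, while two sharing $d-1$ vertices share exactly one — yields $\Var(Z)=o(\mathbb{E}[Z]^2)$, so $Z\geq 1$ with high probability.

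\textbf{Supercritical direction ($c>d$): reduction and union bound.} Now we want $H_{d-1}(Y;R)=0$. Since $Y$'s $(d-1)$-skeleton is that of the simplex on $[n]$, the universal coefficient theorem reduces this to showing every $(d-1)$-cocycle of $Y$ over $R$ is a coboundary. Given a $(d-1)$-cochain $f$, replace it by the minimum-support representative of its coset $f+\delta C^{d-2}(Y;R)$; call such an $f$ \emph{minimal}, of weight $m:=|\supp f|$. For $f$ to be a cocycle of $Y$, every $d$-face $\tau$ of the complete skeleton with $(\delta f)(\tau)\neq 0$ must be absent from $Y$ — an event of probability exactly $(1-p)^{|\supp(\delta f)|}$, since distinct $d$-faces are present independently. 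Hence, bounding by a union over minimal $f$ grouped by weight,
\[
\Pr\big[H_{d-1}(Y;R)\neq 0\big]\ \leq\ \sum_{m\geq 1}\binom{\binom{n}{d}}{m}(|R|-1)^m\ \max_{|\supp f|=m}(1-p)^{|\supp(\delta f)|},
\]
and the work is to lower bound $|\supp(\delta f)|$ for minimal $f$ at each weight.

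\textbf{Supercritical direction: estimating $|\supp(\delta f)|$.} When $m=o(n)$ an elementary count suffices: the support touches $k\leq dm=o(n)$ vertices, and for each $\sigma\in\supp f$ and each of the $\geq n-k$ vertices $v$ not touched by $\supp f$, the only $(d-1)$-subface of $\sigma\cup\{v\}$ in $\supp f$ is $\sigma$ itself, so $(\delta f)(\sigma\cup\{v\})=\pm f(\sigma)\neq 0$; these faces are distinct, giving $|\supp(\delta f)|\geq m(n-k)=(1-o(1))mn$. Feeding $\binom{\binom{n}{d}}{m}\leq n^{(d+o(1))m}$ and $(1-p)^{(1-o(1))mn}=n^{-(1-o(1))cm}$ into the sum gives a contribution $\sum_m n^{(d-c+o(1))m}\to 0$ — and this is the only place $c>d$ is used. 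For $m$ larger the free-vertex bound is too weak, and one invokes the Meshulam--Wallach cocycle-counting lemma: for minimal $f$ of weight $m$, $|\supp(\delta f)|\geq\big(n-O(m^{1/d})\big)m$, which remains $(1-o(1))mn$ as long as $m\leq n^{d-\varepsilon}$, and whose cruder consequence $|\supp(\delta f)|=\Omega(n^{d+1})$ handles the bulk $m\in[n^{d-\varepsilon},\binom{n}{d}]$. A short accounting in each regime — using $\binom{\binom nd}{m}\leq n^{(d-1+o(1))m}$ in the middle range, and the trivial $\binom{\binom nd}{m}\leq 2^{\binom nd}=e^{O(n^d)}$ against $(1-p)^{\Omega(n^{d+1})}=e^{-\Omega(n^d\log n)}$ in the bulk — shows the remaining terms also sum to $o(1)$, completing the proof that $H_{d-1}(Y;R)=0$ with high probability.

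\textbf{The main obstacle.} Everything is routine except the Meshulam--Wallach cocycle-counting lemma: for a minimal cochain whose support spreads over linearly many vertices the free-vertex argument breaks, and one needs a genuine isoperimetric statement — essentially that the complete $d$-skeleton of the simplex is a coboundary expander with the correct dependence on $n$ — to ensure $\delta f$ still has support of order $mn$. A secondary issue is that $R$ is an arbitrary finite abelian group, so "$f(\sigma)\neq 0$" is not a single parity and both the minimality reduction and the counting lemma have to be carried out group-theoretically. This lemma, and its interaction with the union bound, is precisely the technical heart of Meshulam and Wallach's argument.
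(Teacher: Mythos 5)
Your subcritical argument (isolated $(d-1)$-faces via a second-moment computation) and your treatment of the supercritical union bound for weights $m=o(n)$ are correct, and you rightly observe that this is where $c>d$ enters. The genuine gap is in the middle range of weights, where you invoke a ``Meshulam--Wallach cocycle-counting lemma'' asserting that a minimal cochain of weight $m$ has $|\supp(\delta f)|\geq\bigl(n-O(m^{1/d})\bigr)m$. That is not what Meshulam and Wallach prove, and it is false. Their coisoperimetric inequality (Proposition 3.1 of \cite{MW}, quoted as Lemma \ref{coisoperimetric} above) gives only $|\supp(\delta f)|\geq nm/(d+1)$, and the constant loss is unavoidable for minimal cochains of large weight. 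Concretely, take $d=2$, $R=\Z/2\Z$, and let $f$ be the indicator of the edges of a star with center $c$ and $m=\lfloor 0.49\,n\rfloor$ leaves. For every $A\subseteq[n]$ one checks $|\supp(f+\delta 1_A)|\geq m$, so $f$ is minimal of weight $m$; but $\delta f$ is nonzero exactly on the triangles $\{c,x,y\}$ with exactly one of $x,y$ a leaf, so $|\supp(\delta f)|=m(n-1-m)\approx 0.51\,mn$, far below $m\bigl(n-O(\sqrt{m})\bigr)$. Cone-type examples do the same in every $d\geq 2$, so neither the claimed $(1-o(1))mn$ bound up to $m\leq n^{d-\varepsilon}$ nor its $\Omega(n^{d+1})$ consequence in the bulk is available.

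With the true inequality $b\geq nm/(d+1)$, your union bound in the range from $m=\Theta(n)$ up to $m=n^{d-\varepsilon}$ reads $\sum_m n^{(d-1+o(1))m}\,n^{-cm/(d+1)}$, which needs $c>d^2-1$ rather than $c>d$; this is exactly the crude $c>d(d+1)$-type estimate described in Section 3 above, not the sharp threshold. What \cite{MW} actually do at this point is not a stronger isoperimetric inequality but a refined enumeration: they show that weight-$m$ cochains with atypically small coboundary support, $b(\phi)\leq(1-\theta)mn$, are rare, because (their Claim 4.2, Lemma \ref{claim42} above) most of the support lies in the neighborhood $\Gamma(S)$ of a subfamily $S$ of size $O(m/n)+O\bigl(\log\tfrac1\theta\bigr)$; this cuts the number of such supports to roughly $\bigl(Cn^{(d-1)(1-\theta(1-\epsilon))}\bigr)^m$ (cf.\ Lemma \ref{betabound}), and trading this count against $(1-p)^{(1-\theta)mn}$ over $\theta\leq d/(d+1)$ gives exactly $c>d$. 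So your outline correctly isolates where the difficulty sits, but the lemma you plug in there is neither true nor the one Meshulam and Wallach prove, and without the small-$b$ counting argument (or an equivalent) the proof does not close in the intermediate-weight range.
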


If $Y$ is a $d$-dimensional simplicial complex with $H_{d - 1}(Y; \Z/q\Z) = 0$ for all primes $q$ then $H_{d - 1}(Y; \Z) = 0$. However, the theorem of Meshulam and Wallach does not rule out the possibility that $H_{d - 1}(Y; \Z)$ has $q$-torsion for a sequence of primes $q$ which grow with $n$. Indeed until now the question of the homological connectivity threshold (with integer coefficients) has been open for all $d \geq 3$. Previously, the best result for $d \geq 3$ about the homological connectivity threshold was the following result of Hoffman, Kahle and Paquette:
\begin{unnumtheorem}[\cite{HKP2}]
For $d \geq 2$ and $p \geq \frac{80d \log n}{n}$, with high probability $Y \sim Y_d(n, p)$ satisfies $H_{d - 1}(Y) = 0$.
\end{unnumtheorem}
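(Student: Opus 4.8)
The plan is to reduce the vanishing of $H_{d-1}(Y;\Z)$ to the vanishing of $H_{d-1}(Y;\Z/q\Z)$ for *every* prime $q$ simultaneously, and then to control the torsion primes uniformly across the whole process. By the universal coefficient theorem $H_{d-1}(Y;\Z)=0$ iff $H_{d-1}(Y;\Q)=0$ and $H_{d-1}(Y;\Z/q\Z)=0$ for all primes $q$; the rational part is handled already (it follows from the $\Z/2\Z$-result of Linial--Meshulam/Meshulam--Wallach as noted in the excerpt), and for a single fixed prime the Meshulam--Wallach theorem gives vanishing past $c=d$. One direction of the theorem is trivial: if an isolated $(d-1)$-face $\sigma$ remains, then $\sigma$ (or rather a suitable cycle supported near it, using that the $(d-1)$-skeleton is complete) represents a nonzero class, so $H_{d-1}\neq 0$ before the last isolated face is covered. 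So the content is the other direction: \emph{immediately after} the last isolated $(d-1)$-face is covered — which happens w.h.p.\ at $p=\frac{d\log n}{n}(1+o(1))$ — we must have $H_{d-1}(Y;\Z)=0$, i.e.\ no residual torsion.

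The heart of the argument, following the $d=2$ strategy of \L uczak and Peled, is a \emph{local} analysis of potential torsion. First I would establish the free-abelian structure claimed in Corollary 2: slightly before the cover time of the last isolated face, $H_{d-1}(Y;\Z)$ is free of rank equal to the number of isolated faces, so that covering the final isolated faces one at a time decrements the rank to zero without ever creating torsion. To do this I would work at a slightly subcritical density, say $p=\frac{(d-\varepsilon)\log n}{n}$ or a two-round exposure $Y=Y_1\cup Y_2$ where $Y_1\sim Y_d(n,p_1)$ is already past the Meshulam--Wallach threshold for a fixed prime and $Y_2$ is a sparse extra layer, and argue that the non-isolated ``core'' is already homologically connected over $\Z$ while the isolated faces contribute exactly their span. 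Then a short exact sequence / Mayer--Vietoris-type computation (or direct chain-level bookkeeping, since everything has complete $(d-1)$-skeleton) shows that adding a $d$-face covering an isolated $(d-1)$-face kills one free generator and nothing else.

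The main obstacle — and the reason $d\ge 3$ was open — is ruling out torsion whose order grows with $n$, which requires a \emph{quantitative} local obstruction argument rather than the spectral/coboundary-expansion method that works for fixed coefficient groups. The approach I would take: suppose $H_{d-1}(Y;\Z)$ has a nontrivial element of order $m$; then there is a $(d-1)$-cycle $z$ with $mz=\partial w$ but $z\notin\partial C_d$. One shows such a $z$ can be taken to be supported on a ``small'' set of $(d-1)$-faces — via an isoperimetric/shattering argument on $d$-chains in $Y$, using that above the Meshulam--Wallach threshold $Y$ has robust coboundary expansion over $\Z/q$ for the small primes and hence any minimal torsion witness is contained in a bounded-size region. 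One then does a first-moment (union bound) over all such small configurations $(z,w,m)$: the probability that a fixed small subcomplex realizing an order-$m$ torsion class appears as an induced subcomplex of $Y$, summed over all locations and all $m$, tends to $0$. Controlling the contribution of large $m$ is the delicate point; here one uses that a small simplicial complex supporting $m$-torsion must have at least $\sim\log m$ faces (torsion in a complex with $f$ top-cells is bounded, e.g.\ by a determinant/Hadamard bound $e^{O(f\log f)}$ actually forces $f\gtrsim \log m/\log\log m$), so large $m$ costs many faces and the union bound still closes. I expect the bookkeeping to localize the torsion witness, together with this size-vs-order tradeoff, to be the technically heaviest and most novel part; the rest is assembling the hitting-time statement from the $p=\frac{d\log n}{n}$ coincidence of the last-isolated-face cover time with the Meshulam--Wallach threshold.
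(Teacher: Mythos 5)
The statement you were given is the cited theorem of Hoffman, Kahle and Paquette, which this paper does not prove directly; the [HKP2] proof goes through Garland-style spectral estimates on links and is unrelated to the cocycle-counting machinery developed here. What the paper does prove, and what you have actually sketched, is the stronger Corollary~\ref{isolatedfaces}, which yields $H_{d-1}(Y;\Z)=0$ already for any $c>d$ rather than $c\geq 80d$. So the comparison you should be making is with Lemmas~\ref{keylemma}, \ref{deterministic}, \ref{probabilitybound}, and their supporting claims.

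Several of your high-level ideas do appear in the paper: the reduction to simultaneous vanishing of $H_{d-1}(Y;\Z/q\Z)$ for all primes $q$; the claim that $H_{d-1}$ is free of rank equal to the number of isolated facets just before the cover time; and the torsion-versus-size tradeoff, which is Claim~\ref{matrixbound} (a restricted boundary matrix with $k$ columns has torsion at most $\sqrt{d+1}^{\,k}$, forcing $k\gtrsim \log q$ for $q$-torsion). But there is a genuine gap in the engine of your union bound. You propose to localize a homological witness $(z,w,m)$ with $mz=\partial w$ and sum over ``small configurations,'' treating the occurrence of an order-$m$ class as an induced-subcomplex event. This does not work: the filling $d$-chain $w$ may have support spread across all of $Y$, so ``$z$ is a nontrivial torsion class'' is a global property of $Y$, not a property of any bounded ball, and the configuration space you would need to union over is not controlled. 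The paper avoids this entirely by passing to cohomology. The event $z(X)$ --- that a fixed $(d-1)$-subcomplex $X$ carries a minimal-weight $(d-1)$-cocycle over some $\Z/q\Z$ with $q\leq\sqrt{d+1}^{\,|X|}$ --- depends only on which $d$-faces incident to $X$ are present, and Lemma~\ref{probabilitybound} bounds $\Pr(z(X))$ by essentially $(1-p)^{b(X)}$ via a martingale/binomial comparison that tracks the rank of $\partial_d^*$ restricted to the $X$-columns along the stochastic process. This is precisely what eliminates the fatal $(r-1)^k$ factor from a naive union bound over cochains and fields, and it is the step that had blocked the $d\geq 3$ case; your proposal does not contain it. Nor does your sketch make the split the paper needs between the small-cocycle regime (where inclusion-minimal supports are strongly connected and enumerated by Lemma~\ref{stronglyconnected}) and the large-cocycle regime (where Meshulam--Wallach's enumeration, Lemma~\ref{claim42}/\ref{betabound}, plus the coisoperimetric inequality for the field-free quantity $b(X)$ is essential). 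Without both the cohomological localization and the process-based probability bound, the argument as you have outlined it cannot close.
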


For $d = 2$, the main hitting-time result of \cite{LP2} establishes that $\frac{2 \log n}{n}$ is the sharp threshold for the first homology group with integer coefficients to vanish in $Y_2(n, p)$, and our result here generalizes this hitting-time result to higher dimensions. 

We should also mention that over the field $\Z/2\Z$, the hitting-time result has been established. If one considers homology with $\Z/2\Z$ coefficients the hitting-time result was previously known in the $d = 2$ case due to Kahle and Pittel \cite{KP}, and more recently the $\Z/2\Z$ version of the hitting-time for homological connectivity was proved by Cooley et al. \cite{CdGKS} in all dimensions.  In addition a hitting-time result that for $\Q$-coefficients is proved in \cite{HKP}.
\section{Cocycle counting}
While the $d = 2$ case of Theorem \ref{mainresult} has already been established in \cite{LP2}, we develop a new approach based on the methods of Meshulam and Wallach \cite{MW}.  They develop the technique of
\emph{cocycle counting} to show that for any fixed finite abelian group $R$,  $H_{d - 1}(Y_d(n, p); R) = 0$ when $p = \frac{d \log n + \omega(1)}{n}.$ 
That is, rather than considering homology, they consider cohomology and bound the probability that $Y \sim Y_d(n, p)$ has a nontrivial cocycle. When $R$ is fixed and finite this may be accomplished by showing that the expected cardinality $\mathbb{E}|H^{d - 1}(Y; R)|$ tends to $0.$

We will adapt this technique to work over many fields simultaneously.  Following \cite{MW}, we start by defining some useful notation.
\begin{definition}
For a $(d - 1)$-cochain $\phi$ of the simplex on $n$ vertices with coefficients in any field $R$, the \emph{weight} of $\phi$, denoted $w(\phi)$, is defined to be the minimum of the support of $\phi'$ for all $\phi'$ with $\phi - \phi'$ a coboundary, and $b(\phi)$ is defined to be the number of $d$-dimensional faces $\sigma$ in the simplex on $n$ vertices so that $\partial_d^*(\phi)(\sigma) \neq 0$.
\end{definition}
With this notation in hand, Meshulam and Wallach prove the following coisoperimetric inequality \cite[Proposition 3.1]{MW}.

\begin{lemma}[Coisoperimetric inequality]\label{coisoperimetric}
For any abelian group $R$ and any $(d - 1)$-chain of the simplex on $n$ vertices, 
$$b(\phi) \geq \dfrac{n w(\phi)}{d + 1}.$$
\end{lemma}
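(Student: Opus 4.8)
The plan is to prove the inequality by induction on $d$, after first passing to a support-minimal representative. Since the simplex on $n$ vertices has $\tilde H_{d-1}=0$, every $(d-1)$-cochain is a coboundary, so two $(d-1)$-cochains are cohomologous if and only if they have the same image under $\partial_d^*$; hence $b(\phi)=|\supp \partial_d^*\phi|$ depends only on $\beta:=\partial_d^*\phi$, and $w(\phi)$ is just the minimum of $|\supp\phi'|$ over all $\phi'$ with $\partial_d^*\phi'=\beta$. Replacing $\phi$ by such a minimizer, we may assume $|\supp\phi|=w(\phi)$.

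For the base case $d=1$, $\phi$ is a function $[n]\to R$; its cohomology class is $\{\phi+\text{const}\}$, so $w(\phi)=n-n_1$, where $n_1$ is the size of a largest level set of $\phi$. Meanwhile $b(\phi)$ counts the edges whose two endpoints receive distinct values, so if the level sets have sizes $n_1\ge n_2\ge\cdots$ then
\[
b(\phi)=\binom{n}{2}-\sum_i\binom{n_i}{2}=\tfrac12\Big(n^2-\sum_i n_i^2\Big)\ge \tfrac12\big(n^2-n_1\textstyle\sum_i n_i\big)=\tfrac{n(n-n_1)}{2}=\tfrac{n\,w(\phi)}{2},
\]
using $n_i\le n_1$ and $\sum_i n_i=n$. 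This is the $d=1$ instance.

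For the inductive step I would pass to vertex links. Fix a support-minimal $\phi$, and for each vertex $v$ let $\phi_v$ be the $(d-2)$-cochain on $[n]\setminus\{v\}$ given by $\phi_v(\rho)=\pm\phi(\rho\cup\{v\})$, with sign fixed by orientations. Two elementary observations: (i) $\phi_v$ is itself support-minimal, since a cohomologous cochain on $[n]\setminus\{v\}$ of smaller support, coned off with $v$, would yield a cochain cohomologous to $\phi$ of smaller support, contradicting minimality of $\phi$; and (ii) $\sum_v|\supp\phi_v|=d\,w(\phi)$, because each $(d-1)$-face $\tau\in\supp\phi$ contributes to $\supp\phi_v$ for exactly the $d$ vertices $v\in\tau$. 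The inductive hypothesis applied to each $\phi_v$ (dimension $d-1$, on $n-1$ vertices) gives $b(\phi_v)\ge\tfrac{(n-1)|\supp\phi_v|}{d}$, and summing, $\sum_v b(\phi_v)\ge(n-1)w(\phi)$. One then feeds in the exact decomposition $\partial_d^*\phi(\{v\}\cup\tau)=\pm\,\partial_{d-1}^*\phi_v(\tau)\pm\phi(\tau)$ for $(d-1)$-faces $\tau$ of the link $\mathrm{lk}(v)$, together with the double count $(d+1)\,b(\phi)=\sum_v\#\{\sigma\ni v:\partial_d^*\phi(\sigma)\ne0\}$, to recover the claimed bound.

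The step I expect to be the main obstacle is precisely this last combination. The naive estimate $\#\{\sigma\ni v:\partial_d^*\phi(\sigma)\ne0\}\ge b(\phi_v)-\#\{\tau\in\supp\phi:v\notin\tau\}$ — obtained by discarding those $d$-faces $\{v\}\cup\tau$ on which $\partial_{d-1}^*\phi_v(\tau)$ is cancelled by $\pm\phi(\tau)$ — subtracts a term that sums over $v$ to $(n-d)\,w(\phi)$, of the same order as the target, and yields only the useless bound $b(\phi)\ge\tfrac{(d-1)\,w(\phi)}{d+1}$. The content of \cite{MW} here is to avoid this loss by exploiting the support-minimality of $\phi$ \emph{globally} (not merely of the links $\phi_v$): one argues that the $d$-faces through $v$ on which the coboundary vanishes are too few to matter, essentially because an abundance of them would expose a support-reducing coboundary move on $\phi$. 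Carrying out this exchange/charging argument cleanly — and handling the orientation signs, which is also the one place where the passage from a field to an arbitrary abelian group $R$ needs care — is the technical heart of the proof.
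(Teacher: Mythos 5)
Your proposal is not a complete proof: you set up an induction on $d$ via vertex links, but you explicitly stop short of closing the inductive step, observing (correctly) that the naive bound $\#\{\sigma\ni v:\partial_d^*\phi(\sigma)\ne0\}\ge b(\phi_v)-\#\{\tau\in\supp\phi:v\notin\tau\}$ loses a term of order $(n-d)w(\phi)$ and only yields $b(\phi)\ge\frac{(d-1)w(\phi)}{d+1}$. You identify this as ``the technical heart of the proof'' and defer it to \cite{MW}; that is precisely the gap. Note also that the paper itself gives no proof of this lemma --- it cites \cite[Proposition~3.1]{MW} --- so the comparison is against Meshulam--Wallach's argument.

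The actual Meshulam--Wallach argument is not inductive and is considerably shorter; ironically, the key ingredient is the very cone trick you already invoke to prove your claim~(i), applied once more and globally rather than to the links. Take $\phi$ support-minimal. For any vertex $v$, let $c_v$ denote the contraction operator $(c_v\phi)(\rho)=\phi(\{v\}\cup\rho)$ (with $v$ placed first, and $(c_v\phi)(\rho)=0$ when $v\in\rho$). A direct computation shows that $\phi-\partial_{d-1}^*(c_v\phi)$ is cohomologous to $\phi$, vanishes on every $(d-1)$-face containing $v$, and on a $(d-1)$-face $\tau\not\ni v$ equals $\partial_d^*\phi(\{v\}\cup\tau)$. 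Hence its support is exactly $\{\tau:v\notin\tau,\ \partial_d^*\phi(\{v\}\cup\tau)\ne0\}$, which is in bijection with the $d$-faces through $v$ on which $\partial_d^*\phi$ is nonzero. Support-minimality of $\phi$ therefore gives, for \emph{every} vertex $v$,
\[
\#\{\sigma\ni v:\partial_d^*\phi(\sigma)\ne0\}\ \ge\ w(\phi).
\]
Summing over all $n$ vertices and using that each $d$-face has $d+1$ vertices yields $(d+1)\,b(\phi)=\sum_v\#\{\sigma\ni v:\partial_d^*\phi(\sigma)\ne0\}\ge n\,w(\phi)$, which is the statement. This argument uses no division and works verbatim over any abelian group $R$, with signs handled by fixing $v$ first in the ordering of $\{v\}\cup\rho$. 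Your base case $d=1$ and your observations (i) and (ii) are correct, but the induction is an unnecessary detour: once you have the cone identity, you should apply it to $\phi$ itself at each vertex $v$, not pass to the link cochain $\phi_v$.
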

\noindent Note that although $b(\phi)$ and $w(\phi)$ depend on the underlying ring $R$, the coisoperimetric inequality is uniform over all abelian groups $R$.

We will now sketch the basic cocycle counting method.
Observe that if $\phi$ is a $(d - 1)$-cochain then the probability that it is a cocycle over $Y \sim Y_d(n, p)$ is $(1 - p)^{b(\phi)}$. 
Consider each equivalence class of cochains modulo coboundaries and choose a minimal-support element from each equivalence class. If $\phi$ is a cochain with support size $k$ and weight equal to $k$, then by Lemma \ref{coisoperimetric} the probability that $\phi$ is a cocycle is at most $(1 - p)^{nk/(d + 1)}$. Thus if $R$ is a fixed finite field of size $r$, we have that for $Y \sim Y_d(n, p),$
$$
\Pr(H^{d - 1}(Y; R) \neq 0) \leq \sum_{k = 1}^{\binom{n}{d}} \binom{\binom{n}{d}}{k} (r - 1)^k (1 - p)^{nk/(d + 1)}.$$

Indeed there are $\binom{\binom{n}{d}}{k}$ choices for the support of a cochain of weight $k$, and from there at $(r - 1)$ choices for the coefficient associated to each facet in the support. It follows that if $p = \frac{c \log n}{n}$ for $c > d(d + 1)$ then $H^{d - 1}(Y; R) = 0$ for $Y \sim Y_d(n, p)$. In order to improve on this, Mesulam and Wallach find a better bound than $\binom{\binom{n}{d}}{k} (r - 1)^k$ for the number of nontrivial cochains and use the coisopermetric inequality in a more subtle way.\\

In the current situation we want to show that homology with integer coefficients vanishes.
Our approach is based on the elementary observation that for any simplicial complex $X,$ if $H_{d - 1}(X; \Z/q\Z) = 0$ for all primes $q$ then $H_{d - 1}(X; \Z) = 0$. 
So, we adapt the cocycle counting method of Meshulam and Wallach to work over $\Z/q\Z$ for all primes $q$ simultaneously. 

It is worth pointing out that a direct first-moment argument alone cannot work if $q$ is very large. If we sample $Y$ from $Y_d(n, \frac{c \log n}{n})$ then the probability that $Y$ has no $d$-dimensional faces is $\exp(-\Theta(n^d \log n))$.  In this case, the dimension of $H_{d - 1}(Y; R)$ for any field $R$ is $\binom{n - 1}{d}$, so if $R = \Z/q\Z$, for $q$ a prime larger than $\exp(n^d)$, then the expected number of cocycles over $R$ is at least $\exp(\Theta(n^{2d}) - \Theta(n^d \log n)) \rightarrow \infty$. Thus is critical that we eliminate the $(r - 1)^k$ term from the cocycle counting method.\\

 To do so, rather than consider $(d-1)$-cochains, we consider $(d - 1)$-dimensional complexes and bound the probability that any such complexes support a cocycle over any prime-order finite field. We will make use of the coisoperimetric inequality (Lemma \ref{coisoperimetric}), but now in a more geometric way. We first define the following geometric analogue to $b(\phi)$. 

\begin{definition}
For a fixed field $R$ and any $(d - 1)$-dimensional subcomplex $X$ of the simplex on $n$ vertices we define $b(X, R)$ as:
$$b(X, R) := \inf \{b(\phi) : \text{$\phi$ is supported exactly on $X$ with coefficients in $R$ with $w(\phi) = |X|$} \}.$$
Note that $b(X, R)$ can be infinity but only in the situation where there are no cochains of minimum weight supported on $X$.
We also define $b(X)$ to be the infimum of $b(X, R)$ over $R = \Z/q\Z$ for all primes $q$ and $R = \Q$.
\end{definition}

Now $b(X)$ is closely related to $b(\phi)$ where $X$ is a $(d - 1)$-complex and $\phi$ is a cochain supported on $X$, but it removes everything about an underlying coefficient ring. Meshulam and Wallach also define a geometric quantity $\beta(X),$ closely related to $b(\phi)$ for $\phi$ supported on $X,$ as follows.
\begin{definition}
For a $(d - 1)$-dimensional subcomplex $X$ of the simplex on $n$ vertices we define $\beta(X)$ to be the number of $d$-dimensional faces which contain exactly one $(d - 1)$-dimensional face of $X$. 
\end{definition}
We make use of this definition too in Section \ref{largecocycles}. In outlining their proof in \cite{MW}, Meshulam and Wallach point out that the coisoperimetric inequality does not hold if $b(\phi)$ is replaced with $\beta(X)$ and that this is a major obstacle to applying their technique to prove that integer homology vanishes.  Nonetheless, it is a useful quantity because for any cochain $\phi$ with coefficients in $R$, minimally-supported on $X,$ one has $\beta(X) \leq b(X) \leq b(\phi)$. \\

So while $\beta(X)$ does not satisfy the coisoperimetric inequality, the geometric quantity $b(X)$ \emph{does} satisfy it. Indeed, due to the uniformity in $R$ in the coisoperimetric inequality (Lemma~\ref{coisoperimetric}), 
$$b(X) \geq \frac{n |X|}{d + 1}.$$

There is one potential disadvantage to using $b(X)$ in place of $b(\phi)$. If $R$ is a coefficient ring and if $\phi$ is a cochain over $R$ which is minimally supported on $X$, then it is clear from the definition of $b(\phi)$ that the probability that $\phi$ is a cocycle of $Y_d(n, p)$ is $(1 - p)^{b(\phi)}$. However, if instead we wish to bound the probability that $X$ is the support of a cocycle over any prime-order finite field, we no longer have the simple bound $(1 - p)^{b(X)}$.  Nonetheless, as we will show in Lemma \ref{probabilitybound}, this bound is true up to a lower--order correction.  

To frame Lemma \ref{probabilitybound}, we begin by introducing the following notation.
\begin{definition}
For a fixed field $R$ and any $(d - 1)$-dimensional subcomplex $X$ of the simplex on $n$ vertices, we let $z(X, R)$ denote the event that there exists a cocycle $\phi$ over $R$ with $w(\phi) = |X|$ and $\supp(\phi) = X$. We let $z(X)$ denote the event that there exists $R$ in $\{\Z/q\Z : \text{$q$ is prime and at most $\sqrt{d + 1}^{|X|}$} \} \cup \{\Q\}$ so that $z(X, R)$ holds.
\end{definition}

The choice of $\sqrt{d + 1}^{|X|}$ in the definition of $Z(X)$ comes from a bound on the size of the torsion group of the cokernel of an integral matrix, given as Claim \ref{matrixbound}. This claim is essentially Proposition 3 of \cite{Soule} who credits it to Gabber. However, we do have a sharper exponent in our bound than in \cite{Soule} ($t^{\rank(M)}$ compared to $t^{\min\{n, m\}}$). This sharper exponent is not necessary to our application here, but in the interest of keeping the proof self-contained we give a proof of Claim \ref{matrixbound}, and no additional work is require to obtain the sharper exponent.
\begin{claim}\label{matrixbound}
If $M$ is a matrix with integer entries so that the norm of every column of $M$ is at most $t$, then the torsion part of the cokernel of $M$, denoted $\coker(M)_T,$ has size at most $t^{\rank_\Q(M)}$
\end{claim}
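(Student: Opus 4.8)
The plan is to reduce the claim to exhibiting a \emph{single}, well-controlled, nonzero maximal minor of $M$, and then to bound that minor using Hadamard's inequality; the passage from "all minors" to "one minor" is exactly what makes the torsion order accessible.

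First I would recall the standard algebraic fact behind the statement. Write $r = \rank_\Q(M)$ and put $M$ into Smith normal form $M = U D V$ with $U$ and $V$ invertible over $\Z$. Then $\coker(M)$ is isomorphic to a free abelian group together with $\bigoplus_{i=1}^{r}\Z/d_i\Z$, where $d_1 \mid d_2 \mid \cdots \mid d_r$ are the nonzero invariant factors; hence $|\coker(M)_T| = \prod_{i=1}^{r} d_i$. This product is precisely the $r$-th determinantal divisor of $M$, namely the greatest common divisor of all $r \times r$ minors of $M$ (invariance of these gcd's under left and right multiplication by matrices in $\mathrm{GL}$ over $\Z$ is what makes this well defined). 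In particular $|\coker(M)_T|$ divides the absolute value of \emph{any} nonzero $r \times r$ minor of $M$, so it suffices to produce one nonzero $r \times r$ minor whose absolute value is at most $t^{r}$.

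To produce such a minor, note that since $M$ has rank $r$ over $\Q$ there exist $r$ columns of $M$ that are $\Q$-linearly independent; the $(\text{rows})\times r$ submatrix $B$ formed by these columns again has rank $r$, so there is a choice of $r$ of its rows for which the resulting $r \times r$ submatrix $A$ is nonsingular. Each column of $A$ is obtained from a column of $M$ by deleting some coordinates, so its Euclidean norm is at most $t$. Hadamard's inequality then gives $|\det A| \le \prod_{j}\|A_{\cdot j}\| \le t^{r}$, and combining this with the previous paragraph yields $|\coker(M)_T| \le |\det A| \le t^{r}$. (One could equally avoid choosing the rows by hand and instead apply the Cauchy--Binet formula to the Gram matrix $B^{T}B$: one has $\det(B^{T}B) = \sum_{S}(\det B_S)^2 \le \prod_{j}\|B_{\cdot j}\|^2 \le t^{2r}$, which forces every $r\times r$ minor $\det B_S$ to satisfy $|\det B_S| \le t^{r}$, while positive-definiteness of $B^{T}B$ gives $\det(B^{T}B)\ge 1$ and hence some such minor is nonzero.)

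I do not anticipate a genuine obstacle here; the real content is the observation that the torsion order is the gcd of the maximal minors rather than something larger. The only point deserving care is obtaining the exponent $\rank_\Q(M)$ rather than the minimum of the two matrix dimensions: this is exactly why one must first restrict to an $r \times r$ nonsingular submatrix before invoking Hadamard, instead of bounding a product over all columns at once. It is also worth noting that "norm" must be read as the Euclidean ($\ell^2$) norm so that Hadamard's inequality applies directly.
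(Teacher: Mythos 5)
Your proof is correct, but the route is genuinely different from the paper's. You identify $|\coker(M)_T|$ with the $r$th determinantal divisor $D_r$ (the gcd of all $r\times r$ minors) via Smith normal form, observe that $D_r$ divides any nonzero maximal minor, and then produce one such minor bounded by $t^r$ using Hadamard (with the Cauchy--Binet/Gram-matrix variant as a tidy alternative). The paper instead argues by hand: it first restricts $M$ to a maximal set of $\Q$-independent columns $N$, then pads $N$ with standard basis vectors $e_{i_1},\dots$ to form a square nonsingular matrix $N'$, proving by an inductive linear-independence argument that $\coker(M)_T$ embeds into $\coker(N)_T$ which embeds into $\coker(N')_T$; the bound $|\coker(N')_T|=|\det N'|\le t^{\rank_\Q(M)}$ then follows (implicitly) from Hadamard, since the padding columns have unit norm. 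Both arguments reduce to Hadamard on a square matrix whose columns have controlled norm. What yours buys is brevity and transparency: the determinantal-divisor identity does the structural work in one line, whereas the paper's subgroup-inclusion chain is more elementary but requires a careful inductive check that torsion does not collapse under the padding. What the paper's buys is self-containment, avoiding explicit appeal to the invariant-factor theory (though of course the tools are equivalent in strength). One small presentational note: you should flag, as you do in passing, that the paper's ``norm'' must be read as $\ell^2$ for Hadamard to apply; the paper leaves this implicit as well.
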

\begin{proof}
Let $M$ be a matrix which satisfies our assumptions. First, define $N$ to be a restriction of $M$ to a maximal set of $\Q$-linearly independent columns of $M$. We have that $\coker(M)_T \leq \coker(N)_T$. Indeed, this immediate as clearly $\Im_{\Z}(N) \leq \Im_{\Z}(M)$. \\


Now we want to construct a square matrix from $N$ in a canonical way. Beginning with $N$ let $i_1$ be the smallest index in $\{1, ..., m\}$ so that the standard basis vector $e_{i_1}$ is not in the $\Q$-span of $N$; add $e_{i_1}$ to $N$. Now let $i_2$ be the smallest index in $\{1,..., m\}$ so that $e_{i_2}$ is not in the $\Q$-span of $N$ and $e_{i_1}$, add $e_{i_2}$ to the matrix. Continue in this way to arrive at a (necessarily square) matrix $N'$. We check that $\coker(N)_T \leq \coker(N')_T$. \\

We check the subgroup inclusion inductively. Suppose $v$ is a torsion element of the span of the columns of $N$ together with standard basis vector $e_{i_1}, e_{i_2}, \cdots, e_{i_k}$, but $v$ is not a torsion element of the cokernel after adding the column $e_{i_{k + 1}}$. Then $v$ can be written as an integral linear combination of columns of $N$ and standard basis vectors $e_{i_1}, e_{i_2}, \cdots, e_{i_k}, e_{i_{k + 1}}$, with nonzero coefficient $\alpha$ on $e_{i_{k + 1}}$. However since $v$ is a torsion element of the cokernel before adding $e_{i_{k + 1}}$ we have that there exists an nonzero integer $s$ so that $sv$ is in the integer span of the columns $N$ and $e_{i_1}, e_{i_2}, \cdots, e_{i_k}$. This gives us two ways to write $sv$ as a linear combination of columns of $N$ and $e_{i_1}, e_{i_2}, \cdots, e_{i_k}, e_{i_{k + 1}}$, one with coefficient $s\alpha$ on $e_{i_{k + 1}}$ and one with coefficient 0 on $e_{i_{k + 1}}$. By linear independence of the columns of $N'$ we have that $s \alpha = 0$, a contradiction. Thus $\coker(N)_T \leq \coker(N')_T$ and we complete the proof.
\end{proof}
Similar bounds appear in \cite{HKP2, Kalai, LP2}. In \cite{Kalai}, it is shown that if $X$ is a $d$-dimensional simplicial complex on $n$ vertices then $|H_{d - 1}(X)_T| \leq \sqrt{d + 1}^{\binom{n - 2}{d}}$. Claim \ref{matrixbound} essentially gives a local version of this result.\\

\begin{lemma}\label{probabilitybound}
  For any $c > (d-1/2),$
  there is an $n_0$ sufficiently large
  so that
  for all $n \geq n_0,$
  for all $(d - 1)$-dimensional complexes $X$ with $b(X) = (1 - \theta)nk \geq nk/(d + 1),$ where $k := |X|$, 
  and for $Y \sim Y_d(n, \lceil \frac{c \log n}{n} \binom{n}{d + 1} \rceil)$, 
  the probability that $z(X)$ holds is at most $n^{-(1 - \theta) (d - 1/2)k}.$
\end{lemma}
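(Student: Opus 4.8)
The plan is to reduce the event $z(X)$ to a union, over a small family of coefficient rings $R$, of events "$X$ supports a minimally-supported cocycle over $R$", and then to control each of those events by the naive $(1-p)^{b(X,R)}$-type bound together with the coisoperimetric inequality. First I would observe that by definition $z(X)$ is the union of $z(X,R)$ over $R \in \{\Z/q\Z : q \text{ prime}, q \le \sqrt{d+1}^{\,k}\} \cup \{\Q\}$; since the number of primes below $\sqrt{d+1}^{\,k}$ is at most $\sqrt{d+1}^{\,k} = (d+1)^{k/2}$, a union bound turns the problem into bounding $\Pr(z(X,R))$ for a single $R$ and then paying a factor of roughly $(d+1)^{k/2} + 1$. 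The key point is that this multiplicative overhead is only exponential in $k$ with a fixed base, whereas the probability bound we are about to prove will be $n^{-\Omega(k)}$, so the overhead is absorbed for $n$ large; this is exactly why the truncation at $\sqrt{d+1}^{\,k}$ was built into the definition of $z(X)$ (and why Claim \ref{matrixbound} is invoked — any prime $q$ for which $H^{d-1}$ can detect $q$-torsion on a complex of size $k$ must be at most $\sqrt{d+1}^{\,k}$, so restricting to those primes loses nothing).

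Next, for a fixed field $R$, I would bound $\Pr(z(X,R))$. If $\phi$ is a cocycle over $R$ with $\supp(\phi) = X$ and $w(\phi) = |X| = k$, then $\phi$ being a cocycle of $Y$ forces all $d$-faces $\sigma$ with $\partial_d^*(\phi)(\sigma)\neq 0$ to be absent from $Y$; there are at least $b(\phi) \ge b(X,R) \ge b(X) = (1-\theta)nk$ such faces. With $Y \sim Y_d(n, m)$ where $m = \lceil \frac{c\log n}{n}\binom{n}{d+1}\rceil$, the probability that a fixed set of $(1-\theta)nk$ top-faces is entirely avoided is the hypergeometric quantity $\binom{\binom{n}{d+1} - (1-\theta)nk}{m}\big/\binom{\binom{n}{d+1}}{m}$, which is $(1 - p + o(p))^{(1-\theta)nk}$ with $p = \frac{c\log n}{n}$; so this is at most $\exp\!\big(-(1-\theta)nk \cdot (1-o(1))\tfrac{c\log n}{n}\big) = n^{-(1-\theta)ck(1-o(1))}$. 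Since there need be at most one such minimally-supported cocycle-direction per $R$ up to scaling (and over a field the number of nonzero scalings is irrelevant once we have fixed that $X$ merely \emph{supports} such a cocycle — we only need existence, not a count weighted by $r-1$), we do not pick up any $(r-1)^k$ factor here: this is the crucial improvement over the vanilla Meshulam--Wallach estimate, and it is what the whole geometric reformulation via $b(X)$ was designed to achieve.

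Combining the two steps, $\Pr(z(X)) \le \big((d+1)^{k/2}+1\big)\cdot n^{-(1-\theta)ck(1-o(1))}$. Since $c > d - 1/2$, for $n$ large enough the $o(1)$ correction in the exponent and the $(d+1)^{k/2}$ prefactor are both swallowed, leaving a bound of the form $n^{-(1-\theta)(d-1/2)k}$, which is what is claimed; the hypothesis $b(X) = (1-\theta)nk \ge nk/(d+1)$ just records that $\theta \le d/(d+1)$, i.e. that this regime is consistent with the coisoperimetric inequality $b(X)\ge nk/(d+1)$, and is used to guarantee the exponent is genuinely negative and linear in $k$.

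The main obstacle I expect is the second step, specifically handling the discrepancy between the clean independent-model probability $(1-p)^{b(X,R)}$ and the correct probability in the fixed-edge-count process $Y_d(n,m)$: one must show the hypergeometric avoidance probability really is $(1-p)^{(1-\theta)nk}$ up to a $(1-o(1))$ factor in the exponent, uniformly over all $k$ in the relevant range — note $k$ can be as large as $\Theta(n^{d-1})$, so $(1-\theta)nk$ can be a constant fraction of $\binom{n}{d+1}$, and the hypergeometric-to-binomial comparison is not completely trivial in that regime and is presumably where a careful estimate (or an appeal to a standard switching/coupling lemma) is needed. The first step — the union over primes — is routine once one trusts Claim \ref{matrixbound} to justify the truncation, and the final bookkeeping with $c > d-1/2$ is elementary.
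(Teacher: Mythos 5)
Your proposal has a genuine gap at the step you describe as "crucial": the claim that there need be at most one minimally-supported cocycle-direction on $X$ per field $R$ up to scaling. This is false in general. The set of cochains $\phi$ over $R$ with $\supp(\phi) = X$, $w(\phi) = |X|$, and $\delta\phi \equiv 0$ on $Y$ need not be contained in a one-dimensional space; for instance if $Y$ has very few $d$-faces whose boundary meets $X$, the kernel of $\partial_d^*|_X$ restricted to the faces of $Y$ can have dimension as large as $|X|$, and distinct elements of that kernel with full support on $X$ are generally not scalar multiples of one another. Moreover the set of "forbidden" $d$-faces $\{\sigma : \partial_d^*(\phi)(\sigma)\neq 0\}$ depends on $\phi$, so your hypergeometric bound is a bound on the probability that one fixed $\phi$ is a cocycle, not on the event that some such $\phi$ exists. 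Bounding the latter by a union over all such $\phi$ reintroduces exactly the $(r-1)^k$-type factor that the lemma is engineered to avoid, and for $r$ as large as $\sqrt{d+1}^k$ this factor is ruinous.

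The paper's actual proof avoids this by not fixing any $\phi$. Instead it runs the process $Y_d(n,0) \subset Y_d(n,1) \subset \cdots \subset Y_d(n,m)$ and tracks the integer-valued quantity $\X(i)$, the dimension of the kernel of $\partial_d^*$ restricted to the columns indexed by $X$ in $Y_d(n,i)$. The key observation is structural: if $z(X,R,i)$ holds (some weight-$|X|$ cocycle is supported exactly on $X$ in $Y_d(n,i)$), then by the coisoperimetric inequality there are at least $b(X) = (1-\theta)nk$ admissible faces whose addition would knock that cocycle out of the kernel, so the one-step probability of $\X$ dropping is at least $\psi := (1-\theta)nk/\binom{n}{d+1}$. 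Since $\X(0)=k$ and $z(X,R,m)$ forces $\X(m)>0$, the event $z(X,R,m)$ is dominated by the event that a $\mathrm{Bin}(m,\psi)$ variable is less than $k$, which Chernoff controls. This sidesteps both of your worries at once: there is no union over cochains (the kernel dimension collapses them into one scalar), and the fixed-edge-count model is handled natively by the process rather than via a hypergeometric-to-binomial comparison. Your first step (the union bound over at most $\sqrt{d+1}^k$ fields, justified by Claim \ref{matrixbound}) and your final bookkeeping with $c > d-1/2$ both match the paper; it is the middle step that needs the kernel-tracking idea.
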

\begin{proof}
  Fix a $(d - 1)$-dimensional complex with $b(X) = (1 - \theta) nk$ where $k := |X|$, and fix a field $R$ to be either $\Z/q\Z$ for $q \leq \sqrt{d + 1}^{k}$ or to be $\Q$. We will bound the probability of $z(X, R)$ for $R$ fixed and then take a union bound over all at-most $\sqrt{d + 1}^{k}$ necessary fields to bound $z(X)$. We will use the Linial--Meshulam stochastic process $\Y_d(n) = \{Y_d(n,i)\}_{i=0}^{\binom{n}{d+1}}$ to sample from $Y_d(n,m)$ where $m := \lceil \frac{c \log n}{n}\binom{n}{d + 1} \rceil$.  For each $i$, let $z(X, R, i)$ be the event that $X$ is the support of a cocycle over $R$ of weight $k$ in $Y_d(n, i)$. At each step $i,$ we let $\X(i)$ denote the dimension of the kernel of the coboundary matrix of $Y_d(n, i)$ restricted to the columns associated to $X$. Clearly if $\X(m) = 0$, then $X$ is not the support of a cocycle over $R$.\\

For each $i$ let $p_i$ denote the probability that $\X(i + 1) < \X(i)$. Now if $X$ is the support of a cocycle of weight $k$ over $R$ in $Y_d(n, i)$ then 
$$p_i \geq \frac{b(X)}{\binom{n}{d + 1}} \geq \frac{(1 - \theta)nk}{\binom{n}{d + 1}}.$$

We want to bound the following probability
$$\Pr((\X(m) > 0) \cap z(X, R, m)).$$

Note that $z(X, R, m)$ implies $\X(m) > 0$ so the probability above is actually equal to $\Pr(z(X, R))$. Clearly,
\begin{eqnarray*}
\Pr(\X(m) > 0 \cap z(X, R, m)) &\leq& \Pr\left((\X(m) > 0) \cap z(X, R, m) \mid \forall i \leq m, p_i \geq\frac{(1 - \theta)nk}{\binom{n}{d + 1}} \right) \\
&&+  \Pr\left((\X(m) > 0) \cap z(X, R, m) \mid \exists i \leq m, p_i <\frac{(1 - \theta)nk}{\binom{n}{d + 1}} \right) \\
&\leq& \Pr \left( \X(m) > 0 \mid \forall i \leq m, p_i \geq\frac{(1 - \theta)nk}{\binom{n}{d + 1}}\right) \\
&& + \Pr\left(z(X, R, m) \mid \exists i \leq m, p_i <\frac{(1 - \theta)nk}{\binom{n}{d + 1}} \right).
\end{eqnarray*}
Now the second summand is zero. Indeed while $z(X, R, i)$ holds $p_i \geq \frac{(1 - \theta)nk}{\binom{n}{d + 1}}$ and if $z(X, R, i)$ fails for some $i \leq m$, then so does $z(X, R, m)$. The goal is to bound the first summand. This will be accomplished by comparison to a binomial random variable. Let $B$ be a binomial random variable with $m$ trials and success probability $\psi = \dfrac{(1 - \theta) nk}{\binom{n}{d + 1}}$. Since $\X(0) = k$, it follows that 
$$\Pr\left(\X(m) > 0 \mid \forall i \leq m, p_i \geq \frac{(1 - \theta) nk}{\binom{n}{d + 1}} \right) \leq \Pr(B < k).$$
We bound the probability that $B$ is less than $k,$ which since $k$ is less than $\mathbb{E}(B) = m\psi = \Omega(k \log n),$ allows the following version of Chernoff's inequality to apply: 
\[
  \Pr(B < k) \leq \exp( mH_\psi(k/m)),
\]
where $H_\psi(x) = x\log(\psi/x) + (1-x)\log( (1-\psi)/(1-x)).$

Observe that $k$ is no more than $n^d,$ and that $n^d/m = O(1/\log n).$
Hence uniformly in $k \leq n^d,$ we have that
\[
  \begin{aligned}
    mH_\psi(k/m)
    &=
    k\log(m\psi/k)
    +(m-k)\log(1-\psi)
    -(m-k)\log(1-k/m)  \\
    &\leq
    k\log(m\psi/k)
    -m\psi + k(1+\psi) + O(k^2/m) \\
    &\leq
    -m\psi
    +k\log\log n
    +k(1+O(1/\log n)).
  \end{aligned}
\]
Thus there is a constant $C>0$ so that for all $n$ large enough and for all $k \leq n^d$
\begin{eqnarray*}
  \Pr(B < k) \leq n^{-(1 - \theta)ck}(C\log n)^{k}.
\end{eqnarray*}
Now we sum over all fields $\Z/q\Z$ with $q \leq \sqrt{d + 1}^k$ and the field $\Q$ to get that the probability that $X$ is the support of a cocycle over any such field is at most:
\begin{equation} \label{eq:truebound}
  \Pr(z(X)) \leq n^{-(1 - \theta)ck}(C\sqrt{d+1}\cdot \log n)^{k}.
\end{equation}
This gives the desired bound for all $n$ sufficiently large.
\end{proof}

\begin{remark}
We should point out here that there is nothing particularly meaningful about the choice of $(d - 1/2)$ in the lemma. Later, this turns out to be a convenient value to have, and so we use it here to simplify some notation. 

Also, recalling the definition of $b(X),$ there is some cochain $\phi$ over some field $R$ such that $\phi$ is supported on $X$ and so that $b(\phi)=b(X).$  For this particular cochain, the probability it is a cocycle is $(1 - p)^{b(X)}= n^{-(1 - \theta)ck}$.  Hence we have
\[
 n^{-(1 - \theta)ck} \leq \Pr(z(X)) \leq n^{-(1 - \theta)ck}(C\sqrt{d+1}\cdot \log n)^{k},
\]
showing that the above bound is accurate up to subleading factors.
\end{remark}

\section{Overview of the proof}
Now that we have defined $z(X)$, we can give an outline of the proof of Theorem \ref{mainresult}. Essentially the idea of the proof will be to prove that with high probability $Y \sim Y_d(n, \frac{c \log n}{n})$ satisfies three particular conditions for $c > d - 1/2$ and then to show that these three conditions deterministically imply Theorem \ref{mainresult}. The majority of the work of this paper is to prove the former, which we state below as Lemma \ref{keylemma}. To simplify notation, for a $d$-dimensional simplicial complex, we use ``face", ``facet", and ``ridge" to refer to $d$-dimensional faces, $(d - 1)$-dimensional faces, and $(d - 2)$-dimensional faces respectively. We also say that a cocycle $\phi$ is inclusion-minimal over a field $R$ if there is no cocycle over $R$ supported on any proper subset of the support of $\phi$. 
\begin{lemma}\label{keylemma}
Fix $d \geq 2$ and $c > d - 1/2$, then with high probability $Y \sim Y_d(n, \frac{c \log n}{n})$ satisfies the following three conditions:
\begin{enumerate}
\item $z(X)$ fails to hold for all $(d - 1)$-subcomplexes $X$ with $|X| \geq n/(3d)$
\item $Y$ has no inclusion minimal $(d-1)$-cocycles of support size $k$ over any field for $2 \leq k \leq n/(3d)$.
\item $Y$ has no isolated facets that meet at a ridge.
\end{enumerate}
\end{lemma}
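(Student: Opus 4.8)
The plan is to prove the three conditions separately, with the bulk of the effort going into condition (2). For condition (1), I would use the first-moment method directly: the number of $(d-1)$-subcomplexes of size $k$ is at most $\binom{\binom{n}{d}}{k} \le (en^d/k)^k$, and by Lemma \ref{probabilitybound} each satisfies $\Pr(z(X)) \le n^{-(1-\theta)(d-1/2)k}$ where $b(X) = (1-\theta)nk$. The coisoperimetric bound gives $(1-\theta) \ge 1/(d+1)$, but that is too weak; the point is that for $|X| \ge n/(3d)$ one expects $b(X)$ to be much larger than $nk/(d+1)$, i.e. $\theta$ is bounded away from $1-\frac{1}{d+1}$. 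So the real content of (1) is a \emph{geometric} lemma: any $(d-1)$-complex $X$ on $n$ vertices with $|X| \ge n/(3d)$ has $b(X) \ge (1-\theta)nk$ with $\theta$ small enough that $n^{-(1-\theta)(d-1/2)k} \cdot (en^d/k)^k \to 0$ summed over $k \ge n/(3d)$. I would prove this by showing $\beta(X) \ge (1-\theta)nk$ (recall $\beta(X) \le b(X)$) via a counting argument on how many $d$-faces can contain two or more facets of $X$ when $X$ is large — a large complex cannot be too ``clustered'' — mirroring the corresponding step in \cite{MW}.

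For condition (2), the strategy is to analyze the possible \emph{local structure} of a small inclusion-minimal cocycle. If $\phi$ is an inclusion-minimal $(d-1)$-cocycle over a field $R$ with support $X$, $|X| = k$, then every ridge in the boundary of $X$ must be covered by at least two facets of $X$ (otherwise we could reduce the support, contradicting minimality — this is essentially the statement that $X$ has no ``exposed'' ridge), and moreover every $d$-face meeting $X$ must meet it in at least two facets. I would combine this with an edge-counting / link argument: the link of a typical ridge in $Y_d(n, c\log n/n)$ is a sparse random graph on $n$ vertices with $\Theta(\log n)$ edges, so it is a forest plus a few extra edges with high probability, and in particular it is very far from having the connectivity needed to support the local picture of a cocycle. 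Concretely, for each candidate support shape on $k$ facets I would bound the probability that $Y$ contains it \emph{and} that every $d$-face touching it touches in $\ge 2$ facets; the second event forces many $d$-faces to be absent, contributing a factor like $(1-p)^{\Omega(nk)}$, which beats the $n^{dk}$ ways to place the shape once we know $c > d - 1/2$ and $k \ge 2$. The boundary case $k=2$ (two facets sharing a ridge, both needing all their cofaces to be ``doubled'') should be handled directly and is closely related to condition (3).

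For condition (3), I would argue that an isolated facet is a fairly unlikely event already — the expected number of isolated facets at $p = \frac{c\log n}{n}$ with $c > d-1/2 > d-1$ is... actually not tending to zero (it is $n^{d+1-c+o(1)}$-ish, going to infinity unless $c$ is large), so I cannot say there are no isolated facets. Instead I want: no \emph{two} isolated facets sharing a ridge. The expected number of such pairs is at most $\binom{n}{d+1} \cdot (n-d) \cdot \big((1-p)^{n-d}\big)^2$ up to constants (choose a ridge, two extra vertices, and demand that both resulting facets have all $\sim n$ of their cofaces absent), which is $n^{d+1} \cdot n^{-2(n-d)p/\ln n \cdot \ln n}$-type quantity $= n^{d+1-2c+o(1)} \to 0$ since $2c > 2d-1 \ge d+1$ for $d \ge 2$. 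So condition (3) is a short direct first-moment computation.

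The main obstacle I expect is condition (2): controlling \emph{all} possible support-shapes of small inclusion-minimal cocycles uniformly over all fields simultaneously. The field-uniformity is handled for free by the $b(X)$ machinery and Lemma \ref{probabilitybound} (which already ranges over $\Z/q\Z$ for all relevant $q$ and $\Q$), so the genuine difficulty is combinatorial: classifying which small $(d-1)$-complexes $X$ can even \emph{potentially} be the inclusion-minimal support of a cocycle (every boundary ridge doubled, every coface doubled), showing there are few such shapes, and showing that requiring all their cofaces in the ambient complex to be doubled is expensive enough. I would organize this by the number of vertices $v(X)$ spanned by $X$: a minimal-support cocycle on $k$ facets with all boundary ridges doubled must span relatively few vertices (a ``sunflower-free'' / expansion-type bound), so $v(X) = O(k)$, and then the $(1-p)$ cost from the doubling requirement dominates $n^{v(X)}$. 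Getting the constant right — so that the argument survives all the way down to $c = d - 1/2$ rather than some larger threshold — is the delicate part and is presumably why $d - 1/2$, not $d$, appears.
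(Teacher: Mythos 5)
Your plan for condition (3) matches the paper's proof exactly (Lemma \ref{noadjacent}): a short first-moment bound on the number of pairs of isolated facets sharing a ridge.

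For condition (1) there is a genuine gap. You propose to prove a \emph{universal geometric} lower bound of the form $\beta(X) \ge (1-\theta)n|X|$ with $\theta$ bounded away from $d/(d+1)$ for \emph{all} $X$ with $|X| \ge n/(3d)$, and to pair this with the crude $\binom{\binom{n}{d}}{k}$ count. As a brief calculation shows, to beat that count you would need $\beta(X) \gtrsim \frac{2d-2}{2d-1}\,n|X|$, a far stronger inequality than $b(X) \ge \frac{n|X|}{d+1}$. No such uniform bound on $\beta$ holds: Meshulam and Wallach explicitly note (and the paper repeats) that $\beta(X)$ does \emph{not} satisfy a coisoperimetric inequality, and one can construct large $X$ with very small $\beta(X)$ (e.g.\ collections of facets clustering around a fixed low-dimensional face). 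Crucially, this is also not what MW actually do: rather than bounding $\beta$ uniformly, they \emph{enumerate} the complexes of a given size $k$ with $\beta(X) \le (1-\theta)nk$ (Lemma \ref{claim42} / Lemma \ref{betabound} here), obtaining a count of roughly $\left(cn^{(d-1)(1-\theta(1-1/(2d^2)))}\right)^k$ that shrinks as $\theta$ grows. The whole point is that complexes with small $\beta$ are \emph{rare}, not absent. You would need to replace your proposed geometric lemma with this enumeration, then balance the count against $n^{-(1-\theta)(d-1/2)k}$ across all values of $\theta$, which is exactly Lemma \ref{noZ}.

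For condition (2), your overall shape (first moment over candidate supports $X$, using that cofaces touching $X$ in exactly one facet must be absent, and that $X$ spans $O(k)$ vertices) is in the right spirit, but the structural claim you lean on is not correct. You assert that for an inclusion-minimal cocycle, ``every ridge in the boundary of $X$ must be covered by at least two facets of $X$''; the cocycle condition constrains $d$-faces, not ridges, and this claim fails already for $k=2$. The clean structural fact the paper uses is that the support of an inclusion-minimal cocycle is \emph{strongly connected} (if $X = X_1 \sqcup X_2$ with no $d$-face touching both, then $\phi|_{X_1}$ and $\phi|_{X_2}$ are separately cocycles). Strong connectivity yields both the vertex bound $v(X) \le d + k$ and, via a spanning-tree-in-the-dual-graph count (Lemma \ref{stronglyconnected}), the sharp enumeration $n^{d+k-1}(2d)^k$. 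You should replace the ``doubled boundary ridges'' reasoning with strong connectivity, after which your first-moment computation becomes essentially the paper's two short lemmas (split at $k = \log n$ to handle the vertex-count overlap more carefully).
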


These three conditions, in turn, imply the desired homology vanishing on $Y$, as a consequence of the following \emph{deterministic} lemma.

\begin{lemma}\label{deterministic}

Suppose that $Y$ is a $d$-dimensional simplicial complex with complete $(d - 1)$-skeleton so that conditions 1 and 2 from Lemma \ref{keylemma} hold, then $H_{d - 1}(Y)$ is a free abelian group of rank equal to the number of isolated facets of $Y$. Moreover if all three conditions hold then the stochastic process of adding $d$-dimensional faces uniformly at random to $Y$ will result in a complex $Y' \supset Y$ which has $H_{d - 1}(Y') = 0$ exactly at the moment the final isolated facet of $Y$ is covered. 
\end{lemma}
\noindent In the proof of this lemma we encounter the term \emph{strongly-connected} which we define here and will use again in Section \ref{smallcocycles}.
\begin{definition}
  For a $d$-dimensional simplicial complex $X$ we define the \emph{dual graph} $G(X)$ to be the graph whose vertex set is the set $d$-dimensional faces of $X$ with an edge between $\sigma$ and $\tau$ provided that $\sigma$ and $\tau$ intersect at a $(d - 1)$-dimensional face. We say that $X$ is \emph{strongly-connected} if its dual graph is connected. Equivalently, $X$ is strongly connected if for every $\sigma$ and $\tau$ there is a path $\sigma = \sigma_1, \sigma_2, \sigma_3, \cdots, \sigma_k = \tau$, so that for every $1 \leq i \leq k - 1$, $|\sigma_i \cap \sigma_{i + 1}| = d - 1$. 
\end{definition}

\begin{proof}[Proof of Lemma~\ref{deterministic}]
First suppose that $Y$ is such that conditions 1 and 2 hold. Then we have that over $\Q$ every inclusion-minimal cocycle of $Y$ is an isolated facet. Thus $H^{d - 1}(Y; \Q)$ is generated by isolated facets of $Y$. It follows that the same holds for $H^{d - 1}(Y; \Z)$ (recall that torsion subgroups ``shift up" one dimension when we change from homology to cohomology, so we don't immediately have that $H_{d - 1}(Y; \Z)$ is free). We claim that $\beta^{d - 1}(Y; \Z/q\Z) = \beta^{d - 1}(Y; \Q)$ for every prime $q$. This will imply that there is no torsion in homology and so $H^{d - 1}(Y; \Z)$ will be isomorphic to $H_{d - 1}(Y; \Z)$ proving the first part of the claim. Suppose there is a prime $q$ so that $\beta^{d - 1}(Y; \Z/q\Z) > \beta^{d - 1}(Y; \Q)$. Then $Y$ has a nontrivial cocycle $\phi$ with coefficients in $\Z/q\Z$ that is not the image of an integral cocycle modulo $q$. Let $X$ be the support of a minimal-weight representative of $\phi$. We may assume that $X$ has no isolated facets (otherwise we could subtract from $\phi$ an appropriate multiple of a cochain supported on exactly an isolated facet of $X$ to arrive at a new cocycle over $\Z/q\Z$ which is not the image of a cocycle over $\Z$ and has smaller support). \\

Now over $\Z$ we have $\partial_d^* |_X(\phi) = q \psi$ for some integral vector $\psi$. Moreover by conditions 1 and 2, $q > \sqrt{d + 1}^{|X|}$ (and $|X| \geq n/(3d)$). But this $q$ is too large relative to $|X|$ for the cokernel of $\partial_d^*|_X$ to have $q$-torsion by Claim \ref{matrixbound}, thus $q \psi$ in the image of $\partial_d^*|_X$ over $\Z$ implies that $\psi$ is also in the image over $\Z$. Thus there exists an integral vector $\phi'$ so that $\phi - q \phi'$ is supported on $X$ and is a cocycle over $\Z$. However since $X$ has no isolated facets and $H^{d - 1}(X, \Z)$ is generated by isolated facets we have that $\phi - q \phi'$ is a coboundary over $\Z$, but then modding out by $q$ gives us that $\phi$ is a coboundary over $\Z/q\Z$ contradicting our assumption on $\phi$. This proves the first part of the claim. \\

Now suppose that $Y$ satisfies conditions 1, 2, and 3 and let $Y'$ be the complex at the moment in the stochastic process where the last isolated facet of $Y$ is covered. We wish to show that $H_{d - 1}(Y') = 0$. We will prove this by induction on the number of isolated facets of $Y$. By conditions 1 and 2, $H_{d - 1}(Y)$ is a free abelian group generated by the isolated facets of $Y$. Thus if $Y$ has no isolated facets then $H_{d - 1}(Y) = 0$ and $Y' = Y$ so we have the result. For the inductive step, we prove that if $Y$ satisfies conditions 1, 2, and 3 then for any face $\sigma$ which could be added to $Y$, we have that $Y \cup \{\sigma\}$ still satisfies 1, 2, and 3. This will prove that 1, 2, and 3 hold at every step and eventually we cover some isolated facet of the complex and then can apply induction. \\

Conditions 1 and 3 are clearly monotone, so we only have to show that condition 2 is monotone under our other assumptions. Suppose not. Let $\sigma$ be a face so that $Y$ satisfies conditions 1, 2, and 3 but $Y \cup \{\sigma\}$ does not satisfy condition 2. Let $\phi$ be an inclusion-minimal cocycle, with weight and support size at least two, over some field $R$ for $Y \cup \{\sigma\}$. Since $\phi$ is a cocycle of $Y \cup \{\sigma \}$ it is a cocycle for $Y$. But because $Y$ satisfies 1 and 2, we have that $H^{d - 1}(Y; R)$ is generated by isolated facets. Therefore the support of $\phi$ is a union of isolated facets. By inclusion-minimality (after adding $\sigma$) the support of $\phi$ is strongly connected too. However, by 3 we have that the support of $\phi$ must be a single isolated facet, so $\phi$ has support size one, contradicting our assumption. This shows that $Y \cup \{\sigma\}$ satisfies condition 2 and we finish the proof by induction.
\end{proof}

Before we get to the proof of Theorem \ref{mainresult} we state the following corollary which characterizes the structure of $H_{d - 1}(Y)$ for $Y \sim Y_d(n, \frac{c \log n}{n})$ and $c > d - 1/2$. The proof is immediate from Lemmas \ref{keylemma} and \ref{deterministic}:
\begin{corollary}\label{isolatedfaces}
If $c > d - 1/2$, then with high probability $H_{d - 1}(Y)$ for $Y \sim Y_d(n, \frac{c \log n}{n})$ is a free abeilan group of rank equal to the number of isolated facets of $Y$. 
\end{corollary}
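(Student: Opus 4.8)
The plan is to derive the corollary directly from the two lemmas just stated, with essentially no further argument; all of the content has already been packaged into Lemma~\ref{keylemma} and Lemma~\ref{deterministic}. First I would invoke Lemma~\ref{keylemma}: since $c > d - 1/2$, with high probability $Y \sim Y_d(n, \frac{c \log n}{n})$ simultaneously satisfies conditions 1, 2, and 3; in particular, with high probability it satisfies conditions 1 and 2. Write $E_n$ for the event that conditions 1 and 2 both hold for $Y$, so that $\Pr(E_n) \to 1$ as $n \to \infty$.

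Next I would appeal to the first assertion of Lemma~\ref{deterministic}. By construction, every complex in the support of $Y_d(n, p)$ has complete $(d - 1)$-skeleton, so the hypotheses of that lemma are met on the event $E_n$. Hence, on $E_n$, the group $H_{d - 1}(Y)$ is a free abelian group whose rank equals the number of isolated facets of $Y$. Since $\Pr(E_n) \to 1$, this structural conclusion holds with high probability, which is precisely the statement of the corollary.

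The only points requiring any care are bookkeeping ones: checking that the probabilistic input (Lemma~\ref{keylemma}) is stated for the same model $Y_d(n, \frac{c \log n}{n})$ and the same range $c > d - 1/2$ as the corollary, which it is, and noting that the deterministic input (Lemma~\ref{deterministic}) applies to $Y_d(n,p)$ because that model is supported on complexes with complete $(d-1)$-skeleton. There is no genuine obstacle here; the difficulty has all been isolated into Lemma~\ref{keylemma} (and, for the hitting-time refinement, into the third condition used by Lemma~\ref{deterministic}), and the corollary is a purely formal consequence of combining the two.
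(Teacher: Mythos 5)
Your proposal is correct and matches the paper's argument, which explicitly states that the corollary ``is immediate from Lemmas \ref{keylemma} and \ref{deterministic}.'' You apply Lemma~\ref{keylemma} to get conditions 1 and 2 with high probability and then invoke the first assertion of Lemma~\ref{deterministic}, exactly as intended.
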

Together with a first moment argument showing that $Y \sim Y_d(n, \frac{c \log n}{n})$ has no isolated facets with high probability for $c > d,$
this corollary establishes the sharp threshold for integral homology to vanish in $Y_d(n, p)$. Now we give the proof of the main result.
\begin{proof}[Proof of Theorem \ref{mainresult}]
Consider an instance of $\Y_d(n)$, and let $m_0$ be the hitting time for the event that the final isolated facet of $\Y_d(n)$ is covered. Clearly $Y_d(n, i)$ has nontrivial $(d - 1)$st homology group for $i < m_0$. That is the hitting time for $(d-1)$st homology to vanish is not earlier than the hitting time for the final isolated facet to be covered. It therefore suffices to show that with high probability $H_{d - 1}(Y_d(n, m_0)) = 0$. \\

First generate $Y \sim Y_d \left( n, \frac{(d - 1/4) \log n}{n} \right)$. If $Y$ has isolated facets, then run the stochastic Linial--Meshulam process starting at $Y$ and continuing until the moment the last isolated facets is covered. In the case that $Y$ has isolated facets, this generates a complex $Y_d(n, m_0)$ in $\Y_d(n)$. By Lemma \ref{keylemma}, $Y$ satisfies the three stated conditions with high probability and so by Lemma \ref{deterministic}, the probability that $Y_d(n, m_0)$ has nontrivial $(d - 1)$st homology group given that $Y$ has isolated facets is $o(1)$. Thus it suffices to check that the probability that $Y$ has no isolated facets is also $o(1)$, but this follows from a straightforward second moment argument.
\end{proof}

The rest of the paper will be devoted to proving Lemma \ref{keylemma}. Condition 1 will be referred to as the ``large cocycle" condition and will be proved in Section \ref{largecocycles}. Condition 2 will be referred to as the ``small cocycle" condition and will be proved in Section \ref{smallcocycles}. Condition 3 is much easier and we prove it now

\begin{lemma}\label{noadjacent}
If $Y \sim Y_d(n, c \log n/n)$ and $c > (d + 1)/2$ then with high probability $Y$ does not contain two isolated facets that meet at a ridge.
\end{lemma}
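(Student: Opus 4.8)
The plan is a first-moment (union bound) argument over all possible pairs of isolated facets sharing a ridge. First I would count configurations: a pair of $(d-1)$-faces $\{\tau_1, \tau_2\}$ meeting at a $(d-2)$-face $\rho$ is determined by choosing the $(d-1)$-set $\rho$ on $n$ vertices and then two further distinct vertices, so there are $O(n^{d+1})$ such pairs. For a fixed such pair, I would compute the probability that \emph{both} $\tau_1$ and $\tau_2$ are isolated in $Y \sim Y_d(n, c\log n/n)$. A $d$-face covering $\tau_i$ is obtained by adding one of the $n - d$ remaining vertices; the $d$-faces that cover $\tau_1$ and those that cover $\tau_2$ overlap in exactly one face (the one on $\rho$ together with the two special vertices), so the total number of $d$-faces whose absence is required is $2(n-d) - 1 = 2n - O(1)$. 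Hence the probability that this fixed pair is a pair of adjacent isolated facets is $(1 - c\log n/n)^{2n - O(1)} = n^{-2c + o(1)}$.

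Next I would take the union bound over all $O(n^{d+1})$ candidate pairs: the expected number of ridge-adjacent pairs of isolated facets is at most $n^{d+1} \cdot n^{-2c+o(1)} = n^{d+1 - 2c + o(1)}$. This tends to $0$ precisely when $d + 1 - 2c < 0$, i.e. $c > (d+1)/2$, which is exactly the hypothesis. By Markov's inequality the probability that such a pair exists is $o(1)$, completing the proof.

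There is essentially no hard step here; the only mild subtlety is the overlap-counting, namely observing that the event ``$\tau_1$ isolated'' and the event ``$\tau_2$ isolated'' are \emph{not} independent but depend on $2n - O(1)$ rather than $2n$ distinct $d$-faces, which only improves the bound by a constant factor and does not affect the exponent. One could also phrase this via the stochastic process using Lemma~\ref{probabilitybound}-style comparison, but for a single pair of faces the direct $Y_d(n,p)$ computation is cleanest, and the threshold $(d+1)/2 < d - 1/2$ for $d \geq 2$ (with room to spare), so the condition holds comfortably throughout the relevant range $c > d - 1/2$.
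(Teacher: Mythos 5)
Your proposal is correct and follows essentially the same first-moment argument as the paper: count the $O(n^{d+1})$ ridge-sharing pairs, note that requiring both facets isolated excludes $2(n-d)-1$ distinct $d$-faces (the overlap being the single $d$-face on all $d+1$ vertices), giving a probability $n^{-2c+o(1)}$ per pair and an expectation $n^{d+1-2c+o(1)} = o(1)$ for $c > (d+1)/2$. The paper's proof is the same calculation with the configuration count written as $\binom{n}{d+1}\binom{d+1}{d-1}$.
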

\begin{proof}
We will use the first moment method. Two isolated facets that meet a ridge is a subcomplex with two $(d - 1)$-dimensional faces, which are both isolated, and $d + 1$ vertices. The number of such complexes is at most $\binom{n}{d + 1} \binom{d + 1}{d-1}$. The probability that both facets are isolated is at most $(1 - p)^{2(n - d) - 1}$. Thus the expected number of pairs of isolated facets that meet at a ridge is at most
\begin{eqnarray*}
\binom{n}{d + 1} \binom{d + 1}{d-1}(1 - p)^{2(n - d) - 1} &\leq& n^{d + 1} (d + 1)^2 \exp \left( -\frac{c \log n}{n} (2(n - d) - 1) \right) \\
&\leq& n^{d + 1}(d + 1)^2 n^{-2c(1 - o(1)) }
\end{eqnarray*}
This is $o(1)$ since $2c > (d + 1)$. 
\end{proof}

\section{Large cocycles}\label{largecocycles}
The goal of this section is to prove the following lemma about the large cocycle condition. This will be accomplished by using Lemma \ref{probabilitybound} together with an enumeration result from \cite{MW} to bound the probability that $Y \sim Y_d(n, \frac{c \log n}{n})$ contains a $(d - 1)$-dimensional subcomplex $X$, with $|X| \geq n/(2d)$, for which $z(X)$ holds.
\begin{lemma}\label{nolargecocycles}
If $Y \sim Y_d(n,  \frac{c \log n}{n})$ and $c > d - \frac{1}{2}$ then with high probability $z(X)$ fails to hold for all $(d - 1)$-dimensional complexes on $n$ vertices of size at least $n/(3d)$
\end{lemma}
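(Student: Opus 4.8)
The plan is a first-moment (union bound) argument over all candidate supports $X$, weighted by the bound on $\Pr(z(X))$ supplied by Lemma~\ref{probabilitybound}. Write $k := |X|$. The coisoperimetric inequality (Lemma~\ref{coisoperimetric}) gives $b(X) \geq nk/(d+1)$, so we may always write $b(X) = (1-\theta)nk$ for some $\theta \le 1 - \frac{1}{d+1} = \frac{d}{d+1}$; Lemma~\ref{probabilitybound} then bounds $\Pr(z(X))$ by $n^{-(1-\theta)(d-1/2)k}$ for $n$ large (uniformly in $k$, using that $c > d-1/2$ so we can afford a slightly smaller exponent). Summing over all $X$ of size $k$, and then over all $k \ge n/(3d)$, it suffices to show
\[
  \sum_{k \ge n/(3d)} \ \sum_{\substack{X : |X| = k}} n^{-(1-\theta(X))(d-1/2)k} \longrightarrow 0.
\]
The naive count of supports, $\binom{\binom{n}{d}}{k} \le n^{dk}$, is not good enough once $(1-\theta)$ is close to $\frac{1}{d+1}$: we would only get $n^{(d - (d-1/2)/(d+1))k}$, which diverges. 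So the count of complexes must be organized according to the value of $\theta$ — equivalently, according to how far $b(X)$ exceeds its coisoperimetric lower bound. This is exactly the point at which the enumeration result of Meshulam--Wallach enters.

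Concretely, I would invoke the $\beta$-based enumeration from \cite{MW}: for each $k$ and each value $b$, the number of $(d-1)$-complexes $X$ on $n$ vertices with $|X| = k$ and $\beta(X) \le b$ (hence, since $\beta(X) \le b(X)$, also the number with $b(X) = b$) is bounded by something like $n^{(d+1)k - b/n + o(k)}$ — i.e., every unit decrease in $b(X)$ below its maximum possible value $\approx (d+1)nk/(d+1) \cdot$(something) costs roughly a factor $n^{-1/n}$ per face, reflecting that complexes with small boundary must be "clustered." Substituting $b(X) = (1-\theta)nk$, the number of such $X$ is at most $n^{(d+1)k - (1-\theta)k + o(k)} = n^{(d+\theta)k + o(k)}$. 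Multiplying by the probability bound $n^{-(1-\theta)(d-1/2)k}$ gives, per value of $\theta$,
\[
  n^{\bigl(d + \theta - (1-\theta)(d-1/2)\bigr)k + o(k)}
  = n^{\bigl(d + \theta - (d - 1/2) + \theta(d-1/2)\bigr)k + o(k)}
  = n^{\bigl(1/2 + \theta(d+1/2)\bigr)k + o(k)}.
\]
This is where one must be careful: for $\theta$ bounded away from $0$ the exponent $1/2 + \theta(d+1/2)$ is a positive constant and, since $k \ge n/(3d) \to \infty$, the term is superpolynomially small and summing over $\theta$ and $k$ is harmless; but for $\theta$ very close to $0$ (i.e., $b(X)$ nearly maximal), the exponent degenerates to $1/2 + o(1) > 0$, which still suffices — the constant $1/2$ gained from $c > d - 1/2$ is exactly what makes the extremal case $\theta \approx 0$ work. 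The boundary case where $\beta(X)$ is small but $b(X)$ is large (a cochain on $X$ genuinely needs nontrivial coefficients, so $b(\phi) > \beta(X)$) is handled automatically because we lower-bounded $b(X)$ by $\beta(X)$ in the enumeration, so the probability bound only improves.

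The main obstacle, as in \cite{MW}, is establishing the right enumeration bound — controlling the number of $(d-1)$-complexes with a prescribed (small) value of $\beta(X)$ or $b(X)$ — and then checking the exponent arithmetic closes for all $\theta \in (0, \frac{d}{d+1}]$ with room to spare; in particular one must confirm the $o(k)$ error terms from both the Chernoff estimate in Lemma~\ref{probabilitybound} and the enumeration are genuinely subexponential in $k$ uniformly over the range $k \ge n/(3d)$, so that the geometric-type sum over $k$ converges. Once the enumeration lemma is in hand, the remainder is a routine union bound, and I would present it by splitting the sum into $\theta \ge \epsilon$ (superpolynomially small individually, only $\mathrm{poly}(n)$ many terms) and $\theta < \epsilon$ (exponent $\le \tfrac{1}{2} + \epsilon(d+\tfrac12) < 1$ times $k \ge n/(3d)$, again superpolynomially small).
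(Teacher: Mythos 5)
Your overall architecture is right — union bound over candidate supports $X$ stratified by $\theta$, multiplying an enumeration bound against the $n^{-(1-\theta)(d-1/2)k}$ probability bound from Lemma~\ref{probabilitybound} — and this is indeed how the paper proceeds (via Lemma~\ref{noZ} plus a short coupling step to go from $Y_d(n,m)$ to $Y_d(n,p)$). But the specific enumeration bound you guess is wrong in a way that the arithmetic does not survive, and in fact your own final exponent already exhibits the problem.

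You propose that the number of $X$ with $|X|=k$ and $\beta(X)\leq b$ is roughly $n^{(d+1)k - b/n + o(k)}$, hence $n^{(d+\theta)k + o(k)}$ at $b=(1-\theta)nk$. This is backwards: as $b$ decreases the constraint $\beta(X)\le b$ tightens and the count must \emph{decrease}, yet $n^{(d+\theta)k}$ is increasing in $\theta$. More importantly, combining with the probability bound you correctly get exponent $\tfrac12 + \theta(d+\tfrac12)$, which is \emph{positive} for every $\theta\ge 0$; you then assert that since $k\ge n/(3d)\to\infty$ the contribution is ``superpolynomially small,'' but a positive exponent raised to a power $k\to\infty$ is superpolynomially \emph{large}. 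As written the union bound diverges for every $\theta$, including $\theta$ near $0$. The decisive input you are missing is that the Meshulam--Wallach count (Lemma~\ref{betabound} of the paper, a repackaging of their Claim~4.2/Proposition~4.1) is of the form $\bigl(c\,n^{(d-1)(1-\theta(1-\frac{1}{2d^2}))}\bigr)^k$ for $\theta\ge 1/(2d)$ — with base exponent $d-1$, not $d+1$, and decreasing in $\theta$. Paired with $n^{-(1-\theta)(d-1/2)k}$ this gives a combined exponent on the order of $-\tfrac12 + \theta\bigl(\tfrac12 + \tfrac1{2d} - \tfrac1{2d^2}\bigr)$, which stays $\le -\tfrac{1}{2d(d+1)}<0$ for all $\theta \le d/(d+1)$. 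The factor-$n^{2k}$ discrepancy between $d-1$ and $d+1$ in the base, together with the sign reversal in the $\theta$-dependence, is not a bookkeeping issue: it is the entire content of the step. (Your phrase ``$\binom{\binom{n}{d}}{k}\le n^{dk}$'' also leaves a factor on the table; for $k\ge n/(3d)$ this binomial is already $O(n^{(d-1)k})$ up to a $c^k$ constant, which is what the paper uses for the complementary range $\theta<1/(2d)$ where Lemma~\ref{betabound} does not apply.) Finally, the heuristic of ``each unit decrease in $b(X)$ costs a factor $n^{-1/n}$ per face'' does not correspond to the MW argument, which encodes small-$\beta$ complexes via a small generating subfamily $S$ and $\Gamma(S)$ and gets its savings from compressing $\Gamma(S)$ — this is precisely the ingredient one cannot conjure by guessing.
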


Similar to the approach in \cite{MW}, but now avoiding having to deal with coefficients, we want to count the number of $(d - 1)$-complexes on $n$ vertices with $b(X) = (1 - \theta) n |X|$. To do so we recall that $\beta(X) \leq b(X)$, and we count the number of complexes with $\beta(X) \leq (1 - \theta)n|X|$. We make use of the following lemma from Meshulam--Wallach.

\begin{lemma}[Claim 4.2 from \cite{MW}]\label{claim42}
Let $0 < \epsilon \leq 1/2$ and then for $n$ large enough and $X$ so that $\beta(X) \leq (1 - \theta)|X|(n - d)$ for some $0  < \theta \leq 1$, there exists a subfamily $S \subseteq X$ of size less than $C \frac{|X|}{n} + 2\log\frac{1}{\epsilon\theta}$ such that $\Gamma(S) := \{ \tau \in X : |\tau \cap \sigma| = d - 1 \text{ for some $\sigma \in S$}\}$ has size at least $(1 - \epsilon)\theta |X|$, where $C$ is a constant depending only on $\epsilon$ and $d$.
\end{lemma}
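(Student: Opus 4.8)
This is (the higher-dimensional generalization of) Claim~4.2 of \cite{MW}; below is a plan for a self-contained probabilistic proof. For a facet $\sigma\in X$ write $\deg_X(\sigma)$ for the number of facets of $X$ sharing a ridge with $\sigma$, and $a(\sigma)$ for the number of vertices $v\notin\sigma$ such that $\sigma\cup\{v\}$ contains a facet of $X$ other than $\sigma$. Classifying the $d$-faces of the form $\sigma\cup\{v\}$ with $\sigma\in X$ according to whether they are the unique $d$-face on $\sigma$ gives $\beta(X)=|X|(n-d)-\sum_{\sigma\in X}a(\sigma)$, so the hypothesis $\beta(X)\le(1-\theta)|X|(n-d)$ is exactly $\sum_{\sigma\in X}a(\sigma)\ge\theta|X|(n-d)$; note $a(\sigma)\le\deg_X(\sigma)$ and $a(\sigma)\le n-d$. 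A first consequence is that some facet has $a(\sigma)\ge\theta(n-d)$, hence $\ge\theta(n-d)$ neighbours in $X$, so $|X|>\theta(n-d)$ always holds.

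The first move is to dispose of the range $\theta|X|\le 6/\epsilon$: here the target $(1-\epsilon)\theta|X|$ is $O_\epsilon(1)$, and since the average degree $\tfrac1{|X|}\sum_\sigma\deg_X(\sigma)\ge\theta(n-d)$ is large a direct greedy choice of $O_\epsilon(1)$ facets — at worst one new covered facet per chosen facet, and in fact a single facet of near-maximal degree when $|X|\le(n-d)/(1-\epsilon)$ — covers $(1-\epsilon)\theta|X|$ facets of $X$, and $O_\epsilon(1)\le C|X|/n+2\log\tfrac1{\epsilon\theta}$ (either the term $2\log\tfrac1{\epsilon\theta}$ is already $\ge O_\epsilon(1)$, or $|X|=\Omega(n)$ and $C|X|/n$ is). So assume $\theta|X|>6/\epsilon$.

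Now let $q:=\ln(6/\epsilon)/(n-d)$, which is $<1$ for $n$ large in terms of $\epsilon$, and let $S$ be the random subcomplex obtained by including each facet of $X$ independently with probability $q$. For $\sigma\in X$ one has $\Pr[\sigma\notin\Gamma(S)]=(1-q)^{\deg_X(\sigma)}\le(1-q)^{a(\sigma)}$, so
$$\mathbb{E}\,|X\setminus\Gamma(S)|\;\le\;\sum_{\sigma\in X}(1-q)^{a(\sigma)}\;\le\;\max\Bigl\{\,\textstyle\sum_{\sigma}(1-q)^{x_\sigma}\ :\ 0\le x_\sigma\le n-d,\ \sum_\sigma x_\sigma\ge\theta|X|(n-d)\Bigr\}.$$
Since $t\mapsto(1-q)^t$ is convex and decreasing, the maximum is attained at an extreme point, where — the defining inequality being tight there — about $\theta|X|$ of the $x_\sigma$ equal $n-d$ and the rest equal $0$; this gives $\mathbb{E}\,|X\setminus\Gamma(S)|\le(1-\theta)|X|+\theta|X|(1-q)^{n-d}+1$, hence $\mathbb{E}\,|\Gamma(S)|\ge\theta|X|\bigl(1-(1-q)^{n-d}\bigr)-1\ge(1-\tfrac\epsilon2)\theta|X|$, the last step using $(1-q)^{n-d}\le e^{-q(n-d)}=\epsilon/6$ together with $\theta|X|>6/\epsilon$. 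Meanwhile $|S|$ is binomial with mean $q|X|=\Theta_\epsilon(|X|/n)$, which by $|X|>\theta(n-d)$ is also $>\epsilon\theta$. To extract a single good $S$: from $\mathbb{E}\,|\Gamma(S)|\ge(1-\tfrac\epsilon2)\theta|X|$ and $|\Gamma(S)|\le|X|$ a one-line first-moment estimate gives $\Pr[\,|\Gamma(S)|\ge(1-\epsilon)\theta|X|\,]\ge\epsilon\theta/2$, while a Chernoff bound (using $\mathbb{E}|S|>\epsilon\theta$) gives $\Pr[\,|S|>T\,]<\epsilon\theta/2$ for some $T=O_\epsilon(|X|/n)+O(\log\tfrac1{\epsilon\theta})$; so some outcome has both, and after consolidating constants this yields $|S|<C|X|/n+2\log\tfrac1{\epsilon\theta}$ with $C$ depending only on $\epsilon$ and $d$.

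The step I expect to be the crux is the convexity bound above, and the point to get right is why one should not argue more naively. The obvious approach — note that $\ge(\theta-\delta)|X|$ facets have $a(\sigma)\ge\delta(n-d)$ and then cover that subfamily — forces $\delta\lesssim\epsilon\theta$ and produces $|S|$ of order $|X|/(\epsilon\theta\,n)$, i.e.\ with a spurious $1/\theta$ in the leading term. The extreme-point computation avoids this precisely because the configuration that is worst for covering, namely all the ``mass'' $\sum_\sigma a(\sigma)$ concentrated on $\theta|X|$ facets, is the one in which those facets have the largest possible degree $n-d$ and so are covered by a $q$-random sample with $q(n-d)=\Theta(\log\tfrac1\epsilon)$ with probability essentially $1$; no factor $1/\theta$ appears. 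Matching the exact shape $C|X|/n+2\log\tfrac1{\epsilon\theta}$ (e.g.\ the constant $2$) is then a matter of careful constant-tracking, and one may alternatively simply cite \cite{MW}.
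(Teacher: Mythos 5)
Your proposal is sound, but note that the paper itself never proves this lemma: it is imported verbatim as Claim 4.2 of \cite{MW} and used as a black box, so there is no internal proof to compare against, and what you have written is an independent argument (a randomized covering, as opposed to the deterministic selection argument one finds in \cite{MW}). The core steps all check out: the double-counting identity $\beta(X)=|X|(n-d)-\sum_{\sigma\in X}a(\sigma)$ is correct, so the hypothesis really is $\sum_\sigma a(\sigma)\ge\theta|X|(n-d)$ with $a(\sigma)\le\min\{\deg_X(\sigma),n-d\}$; the convexity/extreme-point bound $\mathbb{E}|X\setminus\Gamma(S)|\le(1-\theta)|X|+\theta|X|(1-q)^{n-d}+1$ is valid, since the maximizer of $\sum_\sigma(1-q)^{x_\sigma}$ over the slab $\{0\le x_\sigma\le n-d,\ \sum x_\sigma\ge\theta|X|(n-d)\}$ puts roughly $\theta|X|$ coordinates at $n-d$ and the rest at $0$; and the extraction step (the first-moment bound $\Pr[|\Gamma(S)|\ge(1-\epsilon)\theta|X|]\ge\epsilon\theta/2$ played against a Chernoff bound $\Pr[|S|>T]<\epsilon\theta/2$ with $T=O_\epsilon(|X|/n)+O(\log\frac{1}{\epsilon\theta})$) is correct. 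Two details you gloss over do work, and deserve a sentence in a full write-up: in the range $\theta|X|\le 6/\epsilon$ the greedy step needs $|\Gamma(X)|\ge(1-\epsilon)\theta|X|$, which follows from your own bookkeeping because at least $\theta|X|$ facets have $a(\sigma)\ge1$; and matching the exact shape $C\frac{|X|}{n}+2\log\frac{1}{\epsilon\theta}$ (absorbing the additive constants from the Chernoff threshold) uses the observation $|X|>\theta(n-d)$, so that whenever $\theta$ is bounded below by a function of $\epsilon$ the term $C|X|/n$ dominates for a suitable $C=C(\epsilon,d)$, and otherwise $2\log\frac{1}{\epsilon\theta}$ does --- exactly the dichotomy you already invoke in the small-$\theta|X|$ case. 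Your closing remark is also apt: the naive ``many facets have $a(\sigma)\ge\delta(n-d)$'' truncation does introduce a spurious $1/\theta$, and the extreme-point computation is precisely what avoids it, so the proposal is a legitimate and somewhat more transparent alternative to citing \cite{MW}.
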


We are now ready to use Lemma \ref{claim42} to count the number of complexes $X$ with $\beta(X) \leq (1 - \theta)|X|n$. This will upper bound the number of complexes $X$ with $b(X) = (1 - \theta)|X|n$.

\begin{lemma}[Modification to Proposition 4.1 from \cite{MW}]\label{betabound}
For $n$ large enough, $k \geq n/(3d)$, and $\theta \geq 1/(2d)$ there exists a constant $c = c(d)$ so that the number of $(d - 1)$-complexes $X$ with $|X| = k$ and $\beta(X) \leq (1 - \theta)kn$ is at most 
$$\left( c n^{(d - 1)(1 - \theta(1 - \frac{1}{2d^2}))} \right)^k$$
\end{lemma}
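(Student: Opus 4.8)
The plan is to adapt the encoding argument behind Proposition~4.1 of \cite{MW}. Since $\beta(X)\le b(X)$, it suffices to bound the number of $(d-1)$-dimensional subcomplexes $X$ of the simplex on $n$ vertices with $|X|=k$ and $\beta(X)\le (1-\theta)kn$. Write $\mu:=1-\tfrac{1}{2d^2}$, so that the target exponent is $(d-1)(1-\theta\mu)$, and fix $\epsilon:=1/(4d^2)\le 1/2$. The first step is a small adjustment so that Lemma~\ref{claim42} applies: the bound $\beta(X)\le (1-\theta)kn$ implies $\beta(X)\le (1-\theta')k(n-d)$ with $\theta':=\tfrac{\theta n-d}{n-d}\in(0,1]$, and since $\theta-\theta'=\tfrac{d(1-\theta)}{n-d}\le\tfrac{d}{n-d}$ while $\theta\ge\tfrac{1}{2d}$, a short computation with the choice $\epsilon=1/(4d^2)$ yields $(1-\epsilon)\theta'\ge\theta\mu$ once $n\ge n_0(d)$. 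Applying Lemma~\ref{claim42} with this $\epsilon$ and $\theta'$ then produces a subfamily $S\subseteq X$ with $|S|<C\tfrac{k}{n}+2\log\tfrac{1}{\epsilon\theta'}=:s_0$, where $C=C(d)$, such that $\Gamma(S)\subseteq X$ has size at least $(1-\epsilon)\theta'k\ge\theta\mu k$; any $S$ furnished by the lemma will serve.

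The count is then a routine encoding argument. Every complex $X$ as above is recovered from the data of (a) the family $S$; (b) the subset $\Gamma(S)=X\cap N(S)$ of $N(S)$, where $N(S)$ denotes the set of all facets of the simplex sharing a ridge with some facet of $S$ (so $|N(S)|\le d(n-d)|S|<dns_0$); and (c) the remaining set of $r:=|X\setminus(S\cup\Gamma(S))|$ facets of $X$, which is an arbitrary $r$-element subfamily of the facets lying in neither $S$ nor $N(S)$. The crucial inequality here is $r\le k-|\Gamma(S)|\le k(1-\theta\mu)$. The number of choices for (a) is at most $\binom{n}{d}^{s_0+1}$, which, since $s_0=O(k/n)+O(1)$ with the implied constants depending only on $d$ and since $k\ge n/(3d)$, is at most $c_1^k$ for $n\ge n_0(d)$; the number of choices for (b) is at most $2^{|N(S)|}\le 2^{dns_0}$, which by the same estimates on $s_0$ is at most $c_2^k$ with $c_2=c_2(d)$. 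The content is the estimate for (c), which I would treat by splitting on the size of $r$.

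If $r\le\tfrac{(d-1)n}{6d^4}$, bound the number of choices for (c) by the crude estimate $\binom{\binom{n}{d}}{r}\le n^{dr}\le n^{(d-1)n/(6d^3)}$; since $1-\theta\mu\ge 1-\mu=\tfrac{1}{2d^2}$ and $k\ge n/(3d)$, the last exponent is at most $(d-1)(1-\theta\mu)k$, so this factor is absorbed into the target. If instead $r>\tfrac{(d-1)n}{6d^4}$, then $r=\Omega(n)$ with the constant depending on $d$, so the standard bound gives $\binom{\binom{n}{d}}{r}\le\bigl(e\binom{n}{d}/r\bigr)^{r}\le(c_3\,n^{d-1})^{r}$ for some $c_3=c_3(d)$, and since $r\le k(1-\theta\mu)$ and $c_3\,n^{d-1}\ge 1$ for $n$ large, this is at most $(c_3\,n^{d-1})^{k(1-\theta\mu)}$. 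Multiplying the three bounds, in either case one gets at most $\bigl(c(d)\,n^{(d-1)(1-\theta\mu)}\bigr)^{k}=\bigl(c(d)\,n^{(d-1)(1-\theta(1-1/(2d^2)))}\bigr)^{k}$ for a constant $c(d)$ assembled from $c_1,c_2,c_3$, which is the claimed bound.

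The step I expect to be the main obstacle is the small-$r$ regime, i.e.\ when $X$ is almost entirely contained in $S\cup\Gamma(S)$. There the only estimate available for (c) is ``pick an $r$-element subset among all $\binom{n}{d}$ facets'', which costs roughly $n^{d}$ per facet rather than $n^{d-1}$ and, on its own, would overshoot the target exponent $(d-1)(\cdots)$ by a factor of $n$ per facet. The resolution is that such facets are few, while the target bound carries a cushion of size $n^{\Omega(n)}$ — this is exactly where the restriction $k\ge n/(3d)$ is needed, as it forces $(d-1)(1-\theta\mu)k=\Omega(n)$ — and this cushion absorbs the crude estimate. The hypothesis $\theta\ge 1/(2d)$, on the other hand, enters only in the adjustment step, where it guarantees $(1-\epsilon)\theta'\ge\theta\mu$ and thus pins the final exponent at $1-\theta(1-\tfrac{1}{2d^2})$ for the choice $\epsilon=1/(4d^2)$. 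The remaining ingredients — the estimate $\binom{\binom{n}{d}}{r}\le(e\binom{n}{d}/r)^{r}$ together with $\binom{n}{d}/n\le n^{d-1}/d!$, and the absorption of the $S$- and $\Gamma(S)$-counts into a constant depending only on $d$ — are routine bookkeeping, valid for $n$ large in terms of $d$.
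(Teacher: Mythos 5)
Your proof is correct and follows essentially the same route as the paper's: apply Lemma~\ref{claim42} after replacing $\theta$ with $\theta' = (\theta n - d)/(n-d)$, and then count via the injective map sending $X$ to the triple consisting of $S$, $\Gamma(S)$, and the remainder. The differences — taking $\epsilon = 1/(4d^2)$ instead of $1/(2d^2)$ so that $(1-\epsilon)\theta' \geq \theta(1 - 1/(2d^2))$ holds outright, and splitting on the size $r$ of the third coordinate rather than observing that $\binom{\binom{n}{d}}{j}$ is increasing in $j$ up to the maximum $j = k(1-\theta'(1-1/(2d^2))) = \Omega(n)$ — are minor bookkeeping variants of the same accounting.
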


We give the proof here, essentially as it appears in \cite{MW}, though we omit any consideration of an underlying coefficient ring. The proof follows directly from Lemma \ref{claim42}. 

\begin{proof}
For $n$, $k$, and $\theta$, let $\mathcal{F}_n(k, \theta)$ denote the collection of $(d - 1)$-dimensional subcomplexes $X$ of the simplex on $n$ vertices with $k$ facets and $\beta(X) \leq (1 - \theta)kn$.
If $X \in \mathcal{F}_n(k, \theta)$ then $$\beta(X) \leq \left(1 - \frac{\theta n - d}{n - d} \right) k (n - d)$$

Suppose that $\theta \geq 1/(2d)$ and let $\theta' = \dfrac{\theta n - d}{n - d}$ and $\epsilon = 1/(2d^2)$, by Lemma \ref{claim42} when $n$ is large enough we obtain for every $X \in \mathcal{F}_n(k, \theta')$ where $k \geq n/(3d)$ a set $S$ of size at most $C\frac{k}{n}$, for some constant $C$ depending on $\epsilon$,\footnote{This is where we use that fact that $k \geq n/3d$ and $\theta \geq 1/(2d)$. Indeed the $C$ from Lemma \ref{claim42} gives us a bound of $C \frac{|X|}{n} + 2\log \frac{1}{\epsilon \theta} \leq C \frac{|X|}{n} + 2 \log (4d^3) \leq C \frac{|X|}{n} + 6d \log (4d^3) \frac{|X|}{n}$. So the $C$ in this proof should be the $C$ in Lemma \ref{claim42} plus $6d \log (4d^3)$. Of course we could set any $\delta, \theta_0 >0$ and assume that $k \geq \delta n$ and $\theta \geq \theta_0$, but the choices of $1/(3d)$ and $1/(2d)$ respectively are convenient in other parts of our paper.} so that $\Gamma(S)$ has size at least $(1 - 1/(2d^2)) \theta' k$. Thus we get a map taking $X$ in $\mathcal{F}_n(k, \theta)$ to $(S, \Gamma(S), X - \Gamma(S))$. Since the latter two coordinates of this 3-tuple give a partition of $X$, this map is injective, so the cardinality of $\mathcal{F}_n(k, \theta)$ is at most the number of such tuples. Therefore it is at most
\begin{eqnarray*}
\left(\sum_{i = 0}^{Ck/n} \binom{\binom{n}{d}}{i} \right) \left( 2^{\frac{Ck}{n} dn} \right) \left( \sum_{j = 0}^{k - \theta'k(1 - 1/(2d^2))} \binom{\binom{n}{d}}{j}\right)
\end{eqnarray*}
Now the first two factors in the product above are at most $c_1^k$ and $c_2^k$ respectively for some constants $c_1$ and $c_2$ depending on $d$. Thus for $n$ large enough and $k \leq \binom{n}{d}/2$, we have
\begin{eqnarray*}
|\mathcal{F}_n(k, \theta)| \leq (c_1c_2)^k k (1 - \theta'(1 - 1/(2d^2))) \binom{n^d}{k (1 - \theta'(1 - 1/(2d^2)))}
\end{eqnarray*}
Therefore there exists a constant $c$ so that for $n/(3d) \leq k \leq \binom{n}{d}/2$, 
\begin{eqnarray*}
|\mathcal{F}_n(k, \theta)| \leq c^k n^{(d - 1) (1 - \theta (1 - \frac{1}{2d^2}))k}
\end{eqnarray*}
This finishes the proof of the lemma in the case that $k \leq \binom{n}{d}/2$. In the case that $k$ is larger than $\binom{n}{d}/2$, we may use the trivial bound of $2^{\binom{n}{d}}$ on $|\mathcal{F}_n(k, \theta)|$ and so there is nothing to prove. 
\end{proof}

We are now ready to combine Lemma \ref{betabound} with Lemma \ref{probabilitybound} to show that with high probability $z(X)$ fails to hold for every $(d- 1)$-dimensional complex $X$ on $n$ vertices with at least $n/(3d)$ facets.
\begin{lemma}\label{noZ}
If $Y \sim Y_d(n, \frac{c \log n}{n} \binom{n}{d + 1})$ for $c > d - 1/2$ then with high probability $z(X)$ fails to hold for all $(d - 1)$-dimensional complexes on $n$ vertices of size at least $n/(3d)$.
\end{lemma}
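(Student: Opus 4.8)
The plan is a first-moment (union bound) argument over all $(d-1)$-dimensional subcomplexes $X$ of the simplex on $n$ vertices with $k := |X| \ge n/(3d)$, organized by the value of $b(X)$. If $b(X) = \infty$ then $z(X)$ cannot hold, so discard these; otherwise write $b(X) = (1-\theta)nk$. The coisoperimetric inequality (Lemma~\ref{coisoperimetric}) gives $b(X) \ge nk/(d+1)$, hence $\theta \le d/(d+1)$, and since $b(X)$ is a positive integer bounded by $\binom{n}{d+1}$, for each fixed $k$ there are at most $\binom{n}{d+1} \le n^{d+1}$ possible values of $\theta$. Because $c > d - 1/2$ and $b(X) \ge nk/(d+1)$, Lemma~\ref{probabilitybound} applies (the process $Y_d(n,\frac{c\log n}{n}\binom{n}{d+1})$ is exactly the one appearing there) and gives $\Pr(z(X)) \le n^{-(1-\theta)(d-1/2)k}$. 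So it remains to bound, for each admissible pair $(k,\theta)$, the number $N_k(\theta)$ of complexes $X$ with $|X| = k$ and $b(X) = (1-\theta)nk$, and then show
\[
\sum_{k \ge n/(3d)} \ \sum_{\theta}\ N_k(\theta)\, n^{-(1-\theta)(d-1/2)k} = o(1).
\]

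I would split the $\theta$-range at $1/(2d)$. When $\theta \ge 1/(2d)$, the inequality $\beta(X) \le b(X)$ shows $\beta(X) \le (1-\theta)nk$, so $X$ lies in the family $\mathcal{F}_n(k,\theta)$ and Lemma~\ref{betabound} gives $N_k(\theta) \le \big(c(d)\,n^{(d-1)(1-\theta(1-\frac{1}{2d^2}))}\big)^k$. Multiplying by $n^{-(1-\theta)(d-1/2)k}$, the $(k,\theta)$-term is at most $\big(C n^{E(\theta)}\big)^k$ where
\[
E(\theta) = (d-1)\Big(1-\theta\big(1-\tfrac{1}{2d^2}\big)\Big) - (1-\theta)\big(d-\tfrac12\big) = -\tfrac12 + \theta\Big(\tfrac12 + \tfrac{d-1}{2d^2}\Big),
\]
which is increasing in $\theta$, so on $[1/(2d),\,d/(d+1)]$ it is at most $E\big(\tfrac{d}{d+1}\big) = -\tfrac{1}{2d(d+1)} < 0$. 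When $\theta < 1/(2d)$, Lemma~\ref{betabound} is unavailable, but here I would just use the crude count: since $k \ge n/(3d)$,
\[
N_k(\theta) \le \binom{\binom{n}{d}}{k} \le \Big(\frac{e\binom{n}{d}}{k}\Big)^k \le \big(C' n^{d-1}\big)^k,
\]
while $n^{-(1-\theta)(d-1/2)k} \le n^{-(1-\frac{1}{2d})(d-\frac12)k}$, so this term is at most $\big(C' n^{F}\big)^k$ with $F = (d-1) - (1-\tfrac{1}{2d})(d-\tfrac12) = -\tfrac{1}{4d} < 0$. This crude bound is exactly where $|X| \ge n/(3d)$ is essential: for small $k$ the number of complexes of size $k$ can be as large as $2^{\binom{n}{d}}$, and no bound of the form $n^{-O(k)}$ could compensate.

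Combining the two regimes, for every $k \ge n/(3d)$ the inner sum $\sum_\theta N_k(\theta)\, n^{-(1-\theta)(d-1/2)k}$ is at most $n^{d+1}\big(C'' n^{-\gamma}\big)^k$ with $\gamma := \frac{1}{2d(d+1)} > 0$ (note $\gamma \le \frac{1}{4d}$ for $d \ge 2$, so this is the slower of the two decay rates) and $C'' = C''(d)$. Summing the resulting geometric series over $k \ge n/(3d)$ yields a total of order $n^{d+1}\big(C''n^{-\gamma}\big)^{n/(3d)}$, and since $\big(C''n^{-\gamma}\big)^{n/(3d)} = \exp\!\big(\tfrac{n}{3d}(\log C'' - \gamma\log n)\big)$ decays faster than any power of $n$, this is $o(1)$. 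Hence with high probability $z(X)$ fails for all $(d-1)$-complexes $X$ with $|X| \ge n/(3d)$. The argument is essentially bookkeeping once Lemmas~\ref{probabilitybound} and~\ref{betabound} are in hand; the only genuine subtlety — and the main thing I would be careful about — is the regime $\theta < 1/(2d)$, where $b(X)$ is near its maximum so the $\beta$-enumeration of Lemma~\ref{betabound} degenerates, and one must instead rely on the probability bound $n^{-(1-\theta)(d-1/2)k} \approx n^{-(d-1/2)k}$ being already strong enough to beat the trivial count $(n^{d-1})^k$.
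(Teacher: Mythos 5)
Your proof is correct and follows essentially the same route as the paper: a union bound over pairs $(k,\theta)$, invoking Lemma~\ref{probabilitybound} for the probability, Lemma~\ref{betabound} for enumeration when $\theta \geq 1/(2d)$, the trivial $\binom{\binom{n}{d}}{k}$ count when $\theta < 1/(2d)$, and the same exponent computations yielding $-1/(4d)$ and $-1/(2d(d+1))$ in the two regimes. The bookkeeping differs only cosmetically (you bound $N_k(\theta)$ pointwise in $\theta$ and multiply by $n^{d+1}$, whereas the paper bounds $\sum_{\theta\in\Upsilon_1} f(k,\theta)$ directly), and your explicit remark that $b(X)=\infty$ cases can be discarded is a clean observation the paper leaves implicit.
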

\begin{proof}
For any $(d - 1)$-dimensional subcomplex $X$, we have that the probability of $z(X)$ is at most $n^{-(1 - \theta)(d - 1/2)k}$ where $b(X) = (1 - \theta)nk$ by Lemma \ref{probabilitybound}. Now if we define $f_n(k, \theta)$ to be the number of $X$ with $|X| = k$ and $b(X) = (1 - \theta)nk$, then we wish to show that 
$$\sum_{k \geq n/(3d)} \sum_{\theta \in \Upsilon} f_n(k, \theta) n^{-(1 - \theta)(d - 1/2)k} = o(1),$$
where we set $\Upsilon = \{\theta : (1 - \theta)nk \in \Z, (1 - \theta)nk \leq n^{d + 1}, \theta \leq d/(d + 1)\}.$

For $\theta \in \Upsilon_1 := \{ \theta \in \Upsilon : \theta < 1/(2d)\}$, we cannot apply Lemma \ref{betabound}, but the trivial bound on $\sum_{\theta} f(k, \theta) \leq \binom{n^d}{k}$ works instead. Indeed we have,
\begin{eqnarray*}
\sum_{k \geq n/(3d)} \sum_{\theta \in \Upsilon_1} f(k, \theta) n^{-(1 - \theta)(d - 1/2)k} &\leq& \sum_{k \geq n/(3d)} n^{(d-1)k} n^{-(1 - 1/(2d))(d - 1/2)k}\\
&\leq& \sum_{k \geq n/(3d)} n^{-k/(4d)} = o(1).
\end{eqnarray*}

For $\theta \in \Upsilon_2 := \{ \theta \in \Upsilon : \theta \geq 1/(2d)\}$ we will apply Lemma \ref{betabound}, as $b(X) = (1 - \theta)nk$ implies that $\beta(X) \leq (1 - \theta)nk$ so $f_n(k, \theta) \leq |\mathcal{F}_n(k, \theta)|$. We have
\begin{eqnarray*}
\sum_{k \geq n/(3d)} 
\sum_{\theta \in \Upsilon_2} 
f(k, \theta) n^{-(1 - \theta)(d - 1/2)k} 
&\leq& 
\sum_{k = n/(3d)}^{n^d} 
\sum_{\theta \in \Upsilon_2} 
\left( c n^{(d - 1)(1 - \theta(1 - \frac{1}{2d^2}))} \right)^k n^{-(1 - \theta)(d - 1/2)k} \\
&\leq& 
\sum_{k = n/(3d)}^{n^d} 
\sum_{\theta \in \Upsilon_2} 
\left(c n^{-\frac{1}{2} + (\frac{1}{2} + \frac{1}{2d} - \frac{1}{2d^2})\theta} \right)^k \\ 
&\leq& 
\sum_{k = n/(3d)}^{n^d} 
\sum_{\theta \in \Upsilon_2} 
\left(c n^{-\frac{1}{2} + (\frac{1}{2} + \frac{1}{2d} - \frac{1}{2d^2})\frac{d}{d + 1}} \right)^k \\
&\leq& 
\sum_{k = n/(3d)}^{n^d} 
\sum_{\theta \in \Upsilon_2} 
\left(c n^{-\frac{1}{2d(d + 1)}} \right)^k \\
&\leq& n^{d + 1} 
\sum_{k = n/(3d)}^{n^d} 
\left(c n^{-\frac{1}{2d(d + 1)}} \right)^k 
\leq n^{2d + 1}n^{-\Theta(n)} 
= o(1).
\end{eqnarray*}
\end{proof}

Finally, we use a simple coupling argument to finish the proof of Lemma \ref{nolargecocycles}.
\begin{proof}[Proof of Lemma \ref{nolargecocycles}]
Fix $c' \in (d - 1/2, c)$. Let $Y_1 \sim Y_d(n, \frac{c' \log n}{n} \binom{n}{d + 1})$. With high probability $Y_1 \subseteq Y$. Indeed, by a routine application of Chernoff's bound, if $Y$ is distributed as $Y_d(n, \frac{c \log n}{n})$ then the probability that $Y$ has fewer than $\frac{c' \log n}{n} \binom{n}{d + 1}$ faces is at most 
$$\exp \left( - \frac{ \frac{\log n}{n} \binom{n}{d + 1} (c - c')^2}{2c} \right) = o(1).$$
And by Lemma \ref{noZ}, with high probability $z(X)$ fails to hold for all $X$ on $n$ vertices of size at least $n/(2d)$ in $Y_1$, and hence the same holds in $Y$ since the kernel of the $d$th coboundary map of $Y$ is contained in the kernel of the $d$th coboundary map of $Y_1$.
\end{proof}

\section{Small cocycles}\label{smallcocycles}

To show that the small cocycle condition holds for $Y \sim Y_d(n, \frac{c \log n}{n})$ with high probability for $c > d - 1/2$ holds we rely on the fact that the support of an inclusion minimal cocycle is strongly-connected.
Strongly connected $(d - 1)$-complexes with $k$ facets on $n$ vertices 
are relatively few, in comparison to those that are not strongly connected.
Specifically, we have the following:
\begin{lemma}\label{stronglyconnected}
  The number of strongly-connected $(d - 1)$-complexes with $k$ facets on $n$ vertices is at most
  \[
    n^{d+k-1} (2d)^{k}.
  \]
\end{lemma}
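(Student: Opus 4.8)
The plan is to build any strongly-connected $(d-1)$-complex $X$ with $k$ facets by a greedy spanning-tree-style exploration of its dual graph, and to bound the number of choices made at each step. First I would fix a strongly-connected $X$ and choose a root facet $\sigma_1$; since a facet is determined by its $d$ vertices, there are at most $\binom{n}{d} \le n^d$ choices for $\sigma_1$. Because $X$ is strongly connected, its dual graph $G(X)$ is connected on the $k$ vertices (facets) of $X$, so it has a spanning tree; I would enumerate the facets of $X$ as $\sigma_1, \sigma_2, \dots, \sigma_k$ in an order compatible with this spanning tree (e.g. a BFS/DFS order), so that for each $j \ge 2$ the facet $\sigma_j$ shares a ridge (a $(d-2)$-face) with some earlier facet $\sigma_{i}$, $i < j$.

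The key counting step is then: given $\sigma_1,\dots,\sigma_{j-1}$, how many choices are there for $\sigma_j$? Since $\sigma_j$ shares a ridge with some $\sigma_i$ ($i<j$), I would specify (i) which earlier facet $\sigma_i$ it attaches to, (ii) which ridge of $\sigma_i$ is the shared face, and (iii) the one new vertex of $\sigma_j$ not in that ridge. There are $j-1 < k$ choices for the attaching index, at most $d$ choices for which $(d-2)$-face of $\sigma_i$ is the shared ridge (a $d$-element set has $d$ subsets of size $d-1$, and a ridge sits in a facet as such), and at most $n$ choices for the new vertex. Actually, to record (i) it is cleaner to note that in the spanning-tree order the parent index is determined by the tree structure once we also remember the tree; an easier bookkeeping is to simply charge, for each $j\ge 2$, a factor of $k$ (choice of which already-built facet to hang off of — generously, any of the at most $k$ facets) times $d$ (choice of ridge in that facet) times $n$ (new vertex), but this gives $n^d (knd)^{k-1}$, which is too lossy in the $k$-factor. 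Instead I would avoid the $k$ factor entirely by exploring in a canonical order: process facets in the order they are first discovered by BFS from $\sigma_1$, and when we finish exploring $\sigma_i$ we list all its not-yet-seen dual-neighbors; then each new facet is specified by saying it is the next one in this canonical queue, contributing only the $d$ (ridge) $\times\, n$ (new vertex) factors, plus we must record for each already-discovered facet how many new neighbors it contributes, i.e. a composition of $k-1$ into $k$ parts, costing at most $2^{k-1}$. This yields at most $n^d \cdot (nd)^{k-1} \cdot 2^{k-1} \le n^{d+k-1}(2d)^{k-1} \le n^{d+k-1}(2d)^k$, as desired.

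The main obstacle is precisely this bookkeeping: a naive exploration costs an extra factor of $k$ per step (choosing which old facet to attach to), which would ruin the bound, so one must use a canonical exploration order so that the parent of each new facet is determined by the order together with a cheap ($2^{O(k)}$) encoding of the branching structure — e.g. for each facet, in discovery order, record the number of new facets it spawns, which is a composition of $k-1$ and costs only $2^{k-1}$. Once that is set up, the per-step count is $n\cdot d$ (new vertex times which ridge of the parent), the root costs $n^d$, and multiplying gives the claimed $n^{d+k-1}(2d)^k$; I would also double-check the small-$k$ edge case $k=1$ (a single facet, $n^d \le n^{d}(2d)$) separately.
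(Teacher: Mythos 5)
Your proof follows essentially the same approach as the paper: root the spanning tree of the dual graph ($\binom{n}{d}$ choices), enumerate in BFS order with a $2^{k-1}$ factor to encode the branching structure, and $(dn)^{k-1}$ for the per-node choices of ridge and new vertex. The only difference is expository (you spell out the ``ridge plus one new vertex'' count and the reason a canonical queue avoids a lossy factor of $k$), and the small-$k$ edge case note is a nice touch but not needed.
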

\begin{proof}
  A complex $X$ is strongly connected if and only if the dual graph of $G(X)$ is a connected subgraph of the dual graph $H$ of the $(d-1)$--skeleton of the full simplex on $n$ vertices.  As each $G(X)$ has a spanning tree, we estimate the number of $X$ above by the number of rooted subtrees of $H$ with $k$ vertices.  There are at most
\[
  \binom{n}{d} \cdot 2^{k-1} \cdot (dn)^{k-1}
\]
such rooted subtrees, enumerated in breadth--first--search order.  In this enumeration, the $\binom{n}{d}$ counts the number of choices for the root, the $2^{k-1}$ counts the ways to partition the remaining $k-1$ into the sizes of the neighborhoods of each vertex in the breadh--first--search, and the $(dn)^{k-1}$ overestimates the number of ways to pick the neighborhoods.
\end{proof}

%
\begin{lemma}
If $c > (d + 1)/2$ then with high probability $Y \sim Y_d(n, c \log n / n)$ has no inclusion-minimal $(d-1)$-cocycles of support size $k$ over any field for $2 \leq k \leq \log n$.
\end{lemma}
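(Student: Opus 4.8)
The plan is a first--moment argument over the \emph{possible supports}, using two facts: such a support is strongly connected, so by Lemma~\ref{stronglyconnected} there are very few of them; and the event ``$X$ supports a cocycle'' forces a structural condition on $Y$ that does not depend on the coefficient field, so a single first--moment computation handles all fields at once (in particular the large--characteristic fields not covered by $z(X)$). First I would record the standard reduction: if $\phi$ is an inclusion--minimal $(d-1)$--cocycle over a field $R$ then $X:=\supp(\phi)$ is a strongly--connected $(d-1)$--subcomplex of the simplex on $n$ vertices with $|X|=k$ facets. Indeed, if the dual graph of $X$ split as $X_1\sqcup X_2$ with no facet of $X_1$ sharing a ridge with a facet of $X_2$, then (since two distinct facets of a common $d$--face always share a ridge) no $d$--face of $Y$ could contain a facet of $X_1$ and a facet of $X_2$ at once, so the restriction $\phi|_{X_1}$ (extended by zero) would again be a cocycle, supported on a proper subset of $X$, contradicting inclusion--minimality. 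Hence it suffices to prove that with high probability no strongly--connected $(d-1)$--complex $X$ with $2\le |X|\le \log n$ supports a $(d-1)$--cocycle over any field in $Y$.

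\textbf{The field--independent obstruction.} If $\phi$ is any cocycle over any field with $\supp(\phi)=X$ and $\sigma$ is a $d$--face of $Y$ containing exactly one facet $\tau$ of $X$, then $0=(\partial_d^*\phi)(\sigma)=\pm\phi(\tau)$, forcing $\phi(\tau)=0$, a contradiction. So $X$ can support a cocycle over some field only if $Y$ avoids all of the $\beta(X)$ $d$--faces of the ambient simplex that contain exactly one facet of $X$; this happens with probability exactly $(1-p)^{\beta(X)}$, uniformly over all fields. This is the crucial point: one first--moment estimate now covers every field simultaneously, so the primes $q>\sqrt{d+1}^{\,|X|}$ not handled by $z(X)$ cause no difficulty in this range of $k$.

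\textbf{Lower bound on $\beta(X)$ and the union bound.} For a $(d-1)$--complex with $k$ facets I would bound $\beta(X)$ by counting the $k(n-d)$ pairs $(\tau,v)$ with $\tau$ a facet of $X$ and $v$ a vertex outside $\tau$; since a $d$--face containing $j_\sigma\ge 1$ facets of $X$ arises from exactly $j_\sigma$ such pairs, $k(n-d)=\beta(X)+\sum_{\sigma:\,j_\sigma\ge 2} j_\sigma$. Using $j\le j(j-1)=2\binom{j}{2}$ for $j\ge 2$, and noting $\sum_\sigma \binom{j_\sigma}{2}$ counts pairs of facets of $X$ sharing a ridge, hence is at most $\binom{k}{2}$, one gets $\beta(X)\ge k(n-d)-k(k-1)$, which for $k\le\log n$ is $kn(1-o(1))$, so $(1-p)^{\beta(X)}\le e^{-p\beta(X)}\le(1+o(1))\,n^{-ck}$. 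Combining this with the bound $n^{d+k-1}(2d)^k$ of Lemma~\ref{stronglyconnected} on the number of strongly--connected $(d-1)$--complexes with $k$ facets, a union bound gives
\[
\sum_{k=2}^{\lfloor\log n\rfloor} n^{d+k-1}(2d)^k\,(1+o(1))\,n^{-ck}
=(1+o(1))\,n^{d-1}\sum_{k=2}^{\lfloor\log n\rfloor}\bigl(2d\,n^{1-c}\bigr)^{k}.
\]
Since $c>(d+1)/2>1$ we have $2d\,n^{1-c}\to 0$, so the sum is dominated by its $k=2$ term, giving $O\!\bigl(n^{d-1}\cdot n^{2-2c}\bigr)=O\!\bigl(n^{d+1-2c}\bigr)=o(1)$ precisely because $c>(d+1)/2$.

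\textbf{Main point of care.} There is no serious obstacle here; the two things to get right are that enumerating only the \emph{strongly--connected} complexes (rather than all $(d-1)$--complexes with $k$ facets) is exactly what lets the first--moment bound close at the relaxed threshold $c>(d+1)/2$ rather than $c>d$, and that the obstruction $\beta_Y(X)=0$ is field--independent, so no separate treatment of large--characteristic fields is needed in the range $2\le k\le\log n$.
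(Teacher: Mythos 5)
Your proposal is correct and follows essentially the same route as the paper: a first--moment argument over strongly--connected supports (using Lemma~\ref{stronglyconnected}) together with the observation that the obstruction ``$Y$ avoids every $d$--face containing exactly one facet of $X$'' is field--independent, so one estimate covers all coefficient fields at once. The only differences are cosmetic --- you spell out the strong--connectivity reduction and lower--bound $\beta(X)$ by a pair--counting argument, whereas the paper simply counts the $k(n-d\log n)$ ``bad'' $d$--faces coming from vertices outside the vertex set of $X$; both yield $(1-p)^{kn(1-o(1))}$ and the same conclusion.
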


\begin{proof}
  We use the first moment method. If $X$ is the support of an inclusion-minimal cocycle of $Y$ over some field then $X$ is a strongly-connected $(d - 1)$ complex. Moreover, if $|X| = k \leq \log n$ then at least $n - d \log n$ vertices do not belong to $X$. Now if $X$ is to be the support of a cocycle over any field, then any face $\tau$ obtained as the union of a $(d - 1)$-dimensional face of $X$ and a vertex outside of $X$ must be excluded from $Y$. Thus the probability that a fixed $X$ of size $k \leq \log n$ is the support of a cocycle over any field is at most $(1 - p)^{k(n - d \log n)}$. 
  
  Since $X$ must be strongly connected, the number of choices for $X$ with size $k$ is at most $(2d)^k n^{d - 1 + k}$ by Lemma~\ref{stronglyconnected}. Applying the union bound over $k \in \{2, 3, ..., \log{n}\}$ for the probability that there exists an inclusion-minimal cocycle of support size $k$ we have:
\begin{eqnarray*}
\sum_{k = 2}^{\log{n}} (2d)^kn^{d - 1 + k} (1 - p)^{kn(1 - o(1))} &\leq& n^{d - 1} \sum_{k = 2}^{\log{n}}(2d)^k n^k n^{-ck(1 - o(1))} \\
&\leq& n^{d - 1} (\log{n}) n^{2 - 2c + o(1)} \\
&=& (\log{n}) n^{d - 1 - 2(c - 1) + o(1)} = o(1).
\end{eqnarray*}
\end{proof}

\begin{lemma}
If $c > 3/2$, then with high probability $Y \sim Y_d(n, c \log n /n)$ has no inclusion minimal $(d - 1)$-cocycles of support size $k$ over any field for $\log{n} \leq k \leq n/(3d)$.
\end{lemma}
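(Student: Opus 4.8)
The plan is a first--moment argument in the same spirit as the previous lemma, but note first why the range must be split: the cruder exclusion count used there (every facet of $X$ forces roughly $n$ faces to be absent) only absorbs the $n^{d-1}$ entropy prefactor when $k$ is at most logarithmic, whereas for $k \ge \log n$ we can afford a weaker per--facet count, and this is exactly what drops the threshold to $c>3/2$. Recalling that the support $X$ of an inclusion--minimal cocycle over any field is strongly connected, it suffices to bound, for each $k$ in the range and each strongly connected $(d-1)$-complex $X$ with $|X|=k$, the probability that $X$ is the support of a cocycle of $Y$ over some field, and then to union over $X$ and $k$.

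First I would record a purely combinatorial fact: a strongly connected $(d-1)$-complex $X$ with $k$ facets spans at most $d+k-1$ vertices, since a spanning tree of the dual graph $G(X)$ orders the facets so that each one after the first shares a ridge with an earlier facet and hence contributes at most one new vertex. Thus when $k \le n/(3d)$ (using $d\ge 2$, so that $n/(3d)\le n/6$), for $n$ large the vertex set of $X$ has size at most $n/3$. Next I would bound $\beta(X)$ from below: for each facet $\tau\in X$ and each vertex $u$ outside the vertex set of $X$, the $d$-face $\tau\cup\{u\}$ contains exactly one facet of $X$ -- every $(d-1)$-subface other than $\tau$ contains $u$ and so is not a facet of $X$ -- and distinct pairs $(\tau,u)$ give distinct $d$-faces. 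Hence $\beta(X)\ge k\,(n - n/3) = \tfrac{2}{3}nk$.

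The key point is that the collection of $d$-faces meeting $X$ in exactly one facet does not depend on a coefficient field: if $\phi$ is any nonzero cocycle of $Y$ supported on $X$ and $\sigma$ is a $d$-face of $Y$ containing the single facet $\tau\in X$, then $(\partial_d^*\phi)(\sigma)=\pm\phi(\tau)\ne 0$, a contradiction, so all $\beta(X)$ of these faces must be absent from $Y$. Therefore, for a fixed strongly connected $X$ with $|X|=k$, the probability that $X$ supports a cocycle over any field is at most $(1-p)^{\beta(X)}\le \exp\bigl(-p\cdot\tfrac{2}{3}nk\bigr)=n^{-2ck/3}$. Combining this with the bound $(2d)^k n^{d+k-1}$ on the number of strongly connected $(d-1)$-complexes with $k$ facets (Lemma~\ref{stronglyconnected}) and taking a union bound over $\log n \le k \le n/(3d)$ gives a total probability at most $n^{d-1}\sum_{k\ge \log n}\bigl(2d\,n^{1-2c/3}\bigr)^k$; since $c>3/2$ the base $2d\,n^{1-2c/3}$ is $o(1)$, and since every term has $k\ge\log n$ the sum is $O\bigl(n^{d-1+\log(2d)}e^{-(2c/3-1)\log^2 n}\bigr)=o(1)$.

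The only step requiring care is the numerology: choosing a strongly connected $X$ costs roughly $n^{k}$ in entropy, so one needs $\beta(X)$ to force at least $(1+\varepsilon)k\log n$ exclusions, i.e.\ $\beta(X)\gtrsim (1+\varepsilon)\tfrac{nk}{c}$, and the bound $\beta(X)\ge\tfrac{2}{3}nk$ together with $c>3/2$ supplies exactly this with essentially no slack -- it is precisely the restriction $k\le n/(3d)$, forcing $|V(X)|\le n/3$, that yields the constant $\tfrac23$. The rest is a routine geometric--series estimate.
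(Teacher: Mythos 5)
Your proof is correct and takes essentially the same approach as the paper: strong connectivity of an inclusion-minimal cocycle support bounds both the enumeration (via Lemma~\ref{stronglyconnected}) and, combined with $k \le n/(3d)$, the number of vertices in $X$, yielding $\beta(X) \ge \tfrac{2}{3}nk$ exclusions and a routine first-moment union bound. You spell out a couple of steps the paper leaves implicit (the field-independence of the excluded faces, and the $d+k-1$ vertex bound rather than the cruder $dk$), but the argument is the same.
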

\begin{proof}
If $X$ is the support of such a cocycle then there are at least $n - n/3 = 2n/3$ vertices of $Y$ outside of $X$. Taking this consideration and the same argument as the proof of $k \leq \log{n}$, we bound the following to prove the lemma:
\begin{eqnarray*}
\sum_{k = \log{n}}^{n/(3d)} n^{d - 1 + k} (1 - p)^{2kn/3} &\leq& n^{d - 1} \sum_{k = \log{n}}^{n/(3d)} n^k n^{-2ck/3} \\
&\leq& n^d (n^{2c/3 - 1})^{-\log{n}}\\
&=& n^{-\Theta(\log{n})}.
\end{eqnarray*}
\end{proof}
This finishes the proof of Lemma \ref{keylemma}, and hence the proof of our main result. 

\section{Conclusion}
Our result finally establishes $p = \frac{d \log n}{n}$ as the sharp threshold for homological connectivity of $Y_d(n, p)$. Moreover, Corollary \ref{isolatedfaces} tells us about the structure of the $(d - 1)$st homology group immediately before it vanishes. However, the following two questions are closely related to our main result and remain open.
\begin{itemize}
\item What is the homological connectivity threshold for the random hypergraph model? This model is similar to the Linial--Meshulam model except that one does not start with the complete $(d - 1)$-skeleton. Rather the $d$-dimensional faces are included independently and the complex is obtained by taking the downward closure of the top-dimensional faces. In \cite{CdGKS}, Cooley et al.\ show the hitting-time result for homological connectivity with $\Z/2\Z$ coefficients in the random hypergraph model. Their result establishes that the sharp threshold for homological connectivity with $\Z/2\Z$-coefficients for the random hypergraph model is at $\dfrac{d \log n}{2n}$. Can our methods be adapted to obtain the corresponding result with integer coefficients? 

\item The question about torsion in homology of $Y_d(n, p)$ is raised in \cite{KLNP, LP2}. Namely, experimental evidence strongly suggests that shortly before the first nontrivial cycle\footnote{That is the first top homology class not generated by boundaries of the $(d + 1)$-simplex} appears in the \emph{top} homology group of $\Y_d(n)$, there is an exceptionally large (on the order of $\exp(\Theta(n^d))$) torsion group which appears in the $(d - 1)$st homology group. Outside of this however, it is believed that $\Y_d(n)$ has no torsion in homology; \cite{LP2} formulates this conjecture precisely. Our paper in fact grew out of an attempt to prove the stronger result that for $c$ a sufficiently large constant and $p = c/n$, one has that with high probablity the $(d - 1)$st homology group of $Y_d(n, p)$ is torsion free. However this problem remains open.
\end{itemize}

\bibliography{ResearchBibliography}
\bibliographystyle{amsplain}
\end{document}